\documentclass[a4paper, 11pt]{amsart}

\pdfoutput=1

\usepackage[T1]{fontenc}
\usepackage{mathptmx,amssymb,amscd,latexsym,eulervm}
\usepackage{amsmath,amsthm,mathrsfs,bm,mathtools}
\usepackage[mathscr]{eucal}
\usepackage{mathdots}

\usepackage{array}
\usepackage{needspace}
\usepackage{graphicx}
\usepackage{xcolor}
\usepackage{soul}
\usepackage{tikz}
\usetikzlibrary{arrows,shapes,positioning,calc,patterns,cd,matrix}

\usepackage{setspace}
\usepackage[inner=2.4cm,outer=2.4cm,top=3cm,bottom=2.7cm,marginparwidth=2cm]{geometry}
\usepackage{microtype}
\usepackage{aliascnt}
\usepackage[pagebackref,colorlinks=true,linkcolor=blue,citecolor=blue,urlcolor=blue]{hyperref}
\hypersetup{
  pdftitle={Transcendental Epsilon Multiplicity via Divisor Volumes},
  pdfauthor={Sudipta Das, Stephen Landsittel, Vinh Anh Pham},
  pdfsubject={Epsilon multiplicity, divisor volumes, and transcendence},
  pdfkeywords={epsilon multiplicity, divisor volume, section ring, projective bundle, Baker's theorem}
}
\usepackage[capitalize,nameinlink]{cleveref}

\allowdisplaybreaks
\definecolor{myorange}{RGB}{255,160,70}
\definecolor{mygreen}{RGB}{0,128,0}
\definecolor{darkgreen}{RGB}{0,166,0}

\newtheorem{theorem}{Theorem}[section]
\theoremstyle{definition}

\theoremstyle{plain}
\newtheorem{headthm}{Theorem}

\newaliascnt{headcor}{headthm}

\aliascntresetthe{headcor}

\newaliascnt{headconj}{headthm}

\aliascntresetthe{headconj}

\newaliascnt{corollary}{theorem}
\newtheorem{corollary}[corollary]{Corollary}
\aliascntresetthe{corollary}

\newaliascnt{claim}{theorem}

\aliascntresetthe{claim}

\newaliascnt{lemma}{theorem}
\newtheorem{lemma}[lemma]{Lemma}
\aliascntresetthe{lemma}

\newaliascnt{conjecture}{theorem}

\aliascntresetthe{conjecture}

\newaliascnt{proposition}{theorem}
\newtheorem{proposition}[proposition]{Proposition}
\aliascntresetthe{proposition}

\theoremstyle{definition}
\newaliascnt{definition}{theorem}

\aliascntresetthe{definition}

\newaliascnt{notation}{theorem}

\aliascntresetthe{notation}

\newaliascnt{example}{theorem}

\aliascntresetthe{example}

\newaliascnt{examples}{theorem}

\aliascntresetthe{examples}

\newaliascnt{remark}{theorem}
\newtheorem{remark}[remark]{Remark}
\aliascntresetthe{remark}

\newaliascnt{question}{theorem}
\newtheorem{question}[question]{Question}
\aliascntresetthe{question}

\numberwithin{equation}{section}
\numberwithin{figure}{section}

\DeclareMathOperator{\Proj}{Proj}

\DeclareMathOperator{\conv}{conv}

\DeclareMathOperator{\Sym}{Sym}

\DeclareMathOperator{\vol}{vol}

\newcommand{\m}{\mathfrak m}

\newcommand{\OO}{\mathcal O}

\newcommand{\F}{\mathcal F}

\newcommand{\Mcal}{\mathcal M}

\newcommand{\CC}{\mathbb C}

\newcommand{\NN}{\mathbb Z_{\ge 0}}

\newcommand{\QQ}{\mathbb Q}

\newcommand{\RR}{\mathbb R}

\newcommand{\ZZ}{\mathbb Z}

\newcommand{\eps}{\varepsilon}

\renewcommand{\leq}{\leqslant}
\renewcommand{\geq}{\geqslant}

\begin{document}

\title[Transcendental Epsilon Multiplicity via Divisor Volumes]{Transcendental Epsilon Multiplicity via Divisor Volumes}

\author[Sudipta Das]{Sudipta Das}
\address{Sudipta Das\\ School of Mathematics, Tata Institute of Fundamental Research, Dr. Homi Bhabha Road, Colaba, Mumbai 400005, Maharashtra, India}
\email{sudiptad@math.tifr.res.in}

\author[Stephen Landsittel]{Stephen Landsittel}
\address{Stephen Landsittel\\ Institute of Mathematics, Hebrew University, Givat Ram, Jerusalem 91904, Israel}
\email{stephen.landsittel@mail.huji.ac.il}

\author[Vinh Anh Ph{\d{A}}m]{Vinh Anh Ph{\d{A}}m}
\address{Vinh Anh Ph{\d{A}}m\\ Department of Mathematics, Tulane University, 6823 St. Charles Avenue, New Orleans, LA 70118, USA}
\email{vpham1@tulane.edu}

\subjclass[2020]{Primary 13H15; Secondary 14C20, 14K05, 11J86}
\keywords{epsilon multiplicity, divisor volume, section ring, projective bundle, transcendental number, linear forms in logarithms}

\begin{abstract}
We prove that epsilon multiplicity can take transcendental values.  The
main structural result is a one-ideal formula for section rings:
under natural positivity hypotheses, the epsilon multiplicity of an ideal
generated in one degree is equal to an integral of a divisor-volume function.
This formula transports an asymptotic colength invariant of ideals to the
geometry and arithmetic of divisor volumes.  To produce a transcendental
value, we combine the formula with a shifted projective-bundle construction
inspired by Borntr\"ager and Nickel.  The shift places the construction in the
positivity range required by the one-ideal formula while preserving the
underlying disk geometry of the volume computation.  Reversing the order of
integration reduces the resulting integral to three integrals of rational
functions.  Their arctangent terms cancel exactly, whereas the remaining real
logarithms form an explicit algebraic linear combination whose value is
positive.  Baker's theorem then implies transcendence.  Consequently, there
exists a homogeneous ideal in a normal standard graded domain whose epsilon
multiplicity is transcendental.
\end{abstract}

\maketitle

\section{Introduction}

Hilbert--Samuel multiplicity is one of the basic numerical invariants of
local algebra.  For an $\mathfrak m$-primary ideal in a Noetherian local ring
$(R,\mathfrak m)$ it is governed by a Hilbert function which is eventually polynomial and controls the singularities of the underlying ring.  Epsilon
multiplicity, introduced by Ulrich and Validashti \cite{UlrichValidashti}, measures ideals beyond the $\mathfrak m$-primary case.  It is
defined by the asymptotic growth of the local cohomology lengths
$$
        \lambda_R\bigl(H^0_{\mathfrak m}(R/I^n)\bigr)
        =
        \lambda_R\bigl((I^n:\mathfrak m^\infty)/I^n\bigr),
$$
and it has become a useful invariant in the study of integral dependence,
local volumes, and asymptotic algebraic geometry; see, for example,
\cite{UlrichValidashti,CutLan24,Fulger}.

The arithmetic behavior of epsilon multiplicity is much less rigid than that
of Hilbert--Samuel multiplicity.  Cutkosky, H\`a, Srinivasan, and Theodorescu
proved that epsilon multiplicity can be irrational \cite{CHST}.  Cutkosky's
work and its refinements relate epsilon multiplicity to volume-theoretic
limits in several settings \cite{Cut11,CutLan24,Lan25}, and recent density
function methods reveal further structure behind these limits \cite{DRT25}.
These results suggest that epsilon multiplicity is not merely a generalized
multiplicity, but an invariant whose values can record the geometry and
arithmetic of asymptotic section counts.  A recent preprint of Sarkar
\cite{Sarkar26} proves a multiplicity--volume formula for epsilon multiplicity
through limits attached to families of ideals.  The formula established here
is of a different nature: for a single homogeneous ideal in a section ring, it
identifies epsilon multiplicity directly with an integral of a divisor-volume
function.

The first main result of this paper shows that the arithmetic complexity of epsilon multiplicity goes
beyond irrationality.

\begin{headthm}\label{thm:intro-main}
There exists a $5$-dimensional normal standard graded domain $S$ and a
homogeneous ideal $J\subseteq S$ such that
$$
        \varepsilon(JS_{S_+})
$$
is transcendental.
\end{headthm}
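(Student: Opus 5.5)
The strategy follows the outline in the abstract. There are four steps: a one-ideal formula expressing $\varepsilon$ as an integral of divisor volumes, a geometric construction, an explicit evaluation of that integral, and an application of Baker's theorem.

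\textbf{Step 1: the formula.} Let $X$ be a normal projective variety of dimension $4$, let $A$ be an ample divisor with section ring $S=\bigoplus_{m\ge 0}H^0(X,mA)$, and assume $S$ is standard graded after replacing $A$ by a multiple. Let $E$ be an effective divisor, or more generally a linear system, and let $J\subseteq S$ be generated in a single degree $d$ by sections of $dA$ vanishing along $E$. Then $S_+$ is the irrelevant ideal and $\dim S=5$.

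The saturation $(J^n:S_+^\infty)_m$ should be identified, in degree $m\ge nd$, with $H^0\bigl(X,\,mA-nE\bigr)$ (after passing to the appropriate base-locus model). Meanwhile $(J^n)_m$ has dimension growing like the lower-order count coming from the image of $\Sym^n$. Under positivity hypotheses such as $dA-E$ being big and nef, or $J$ being asymptotically saturated in high degree, the length $\lambda\bigl((J^n)^{\mathrm{sat}}/J^n\bigr)$ becomes a sum over $m$ of $h^0(mA-nE)$ minus the corresponding Hilbert function of $J^n$. Normalizing by $n^{5}/5!$ and writing $m=tn$, the Riemann sum converges to
$$
\varepsilon(J)=5\int_{d}^{\infty}\bigl(\vol(tA-E)-(\text{contribution of }J^n)\bigr)\,dt .
$$
In the cleanest setup, that is the "one-ideal formula" the abstract announces, this is an integral of $t\mapsto\vol(tA-E)$ over a bounded range of $t$ where the saturation differs from $J^n$, plus a polynomial correction. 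I take this formula as the paper's structural theorem.

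\textbf{Step 2: the construction.} Choose $X$ as a projective bundle $\mathbb P(\mathcal V)$ over a surface, following Borntr\"ager--Nickel. Their volume function $D\mapsto\vol(D)$ is piecewise given by areas of regions cut out of disks, which arises from a quadratic intersection form on an abelian surface. I would shift $\mathcal V$ by a sufficiently ample twist so that the relevant divisors $tA-E$ lie in the range where the hypotheses of Step 1 hold. The shift should only translate the parameter, not alter the disk geometry. One then computes $\vol(tA-E)$ explicitly as an expression in $t$ involving $\sqrt{q(t)}$ and $\arctan$ or $\arccos$ of algebraic functions of $t$.

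\textbf{Step 3: the integral.} Integrate $\vol(tA-E)$ in $t$ by rewriting the volume as a double integral over the disk region and reversing the order of integration. This reduces $\varepsilon(J)$ to a sum of a rational number and three integrals of rational functions with algebraic coefficients. Each such integral evaluates, by partial fractions, to algebraic multiples of logarithms of algebraic numbers and arctangent terms, i.e.\ $\pi$ times algebraic numbers, or $\arg$ of algebraic numbers. One must check that the arctangent contributions cancel exactly, leaving
$$
\varepsilon(J)=\beta_0+\sum_i\beta_i\log\alpha_i
$$
with $\beta_i,\alpha_i$ algebraic. If a $\pi$ term survived, it could still be absorbed into the Baker setting as $\log(-1)$.

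\textbf{Step 4: transcendence.} By Baker's theorem, such a combination with $\beta_0\neq0$, or with nonzero $\log$-part, is transcendental unless it vanishes. So it suffices to show that the logarithmic part $\sum\beta_i\log\alpha_i$ is nonzero, and if $\beta_0\ne 0$, Baker directly gives transcendence. Nonvanishing follows by showing the value is positive via explicit numerical estimation, or by multiplicative independence of the $\alpha_i$.

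\textbf{Main obstacle.} I expect the hardest part to be Step 1 combined with the shift in Step 2: proving that the saturation of $J^n$ is exactly the section module of $mA-nE$ in the relevant range, uniformly enough to pass to the limit, while keeping the Borntr\"ager--Nickel volume computation intact. The exact cancellation of the arctangent terms is the delicate computational point.
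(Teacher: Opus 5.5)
Your four-step architecture matches the paper's: one-ideal volume formula, shifted Borntr\"ager--Nickel projective bundle, reversal of the order of integration, then Baker. However, Step~1 as you state it is wrong, and the two decisive verifications are left undone.

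\textbf{Step 1.} You place the saturation defect in degrees $m\ge nd$ and integrate over $[d,\infty)$, with a subtracted ``contribution of $J^n$'' and a polynomial correction. In fact the defect lives \emph{below} the generating degree.
\begin{itemize}
\item Since $J$ is generated in degree $d$, $(J^n)_m=0$ for $m<dn$.
\item Once $dA-E$ is globally generated (so $\widetilde J\cong\OO_X(-E)$), one has $(J^n)^{\rm sat}_m=H^0(X,mA-nE)$ in \emph{every} degree. This is nonzero as soon as $m\geq\tau n$, where $\tau$ is the pseudo-effective threshold.
\item The hypotheses that kill everything at and above degree $dn$ are not ``$dA-E$ big and nef.'' They are normal generation of $dA-E$, which gives $(J^n)_{dn}=H^0(X,n(dA-E))$, together with eventual $0$-regularity of $(dA-E)^{\otimes n}$ with respect to $A$. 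By Castelnuovo--Mumford, the latter makes $H^0(X,n(dA-E))\otimes H^0(X,(m-dn)A)\to H^0(X,mA-nE)$ onto for all $m\geq dn$.
\end{itemize}
Then $\lambda\bigl((J^n)^{\rm sat}/J^n\bigr)=\sum_{\tau n\le m<dn}h^0(X,mA-nE)$ exactly. A uniform $h^0$-versus-volume estimate on the compact segment $\{tA-E:\tau\le t\le d\}$ then gives $\eps=5\int_\tau^d\vol_X(tA-E)\,dt$, with no correction term. Under these hypotheses your integrand on $[d,\infty)$ vanishes identically, so your formula would return $0$. Without them you have no handle on the Hilbert function of $J^n$ at all.

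\textbf{Step 2.} You never specify $A$ and $E$, and that choice is what reconciles the needed positivity with the disk geometry. The paper takes $L_0=M_s+B$ and $D_0=qB$, with $B=\pi^*A_\beta$ and $A_\beta\in|\beta A|$. This choice does three things.
\begin{itemize}
\item The endpoint $qL_0-D_0=qM_s$ is ample and globally generated; this uses $s\geq5$ and Bauer--Szemberg.
\item After passing to a multiple $k$, normal generation holds and eventual regularity follows from Serre vanishing for powers of the ample bundle $kqM_s$.
\item One gets $\vol_{X_s}(tL_0-D_0)=\tfrac{3N}{2}t^4\Phi(R(t))$, where $\Phi$ is the weighted area over the triangle $T$ cut by the disk $B_R$, and $R(t)=s+1+\beta-\beta q/t$. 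As $t$ runs over $[\tau_0,q]$, $R(t)$ runs from $4/\sqrt{10}$ (which is what identifies $\tau_0$) to $s+1$.
\end{itemize}

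\textbf{Step 4.} Your claim that $\beta_0\neq0$ alone lets Baker give transcendence is false: $\beta_0$ plus a vanishing logarithmic part is algebraic. What must be proved is that the logarithmic part is nonzero. That is the real content here, and you leave it to unspecified ``numerical estimation.'' Your $\log(-1)$ remark is correct, so the arctangent cancellation is not logically required for Baker. In the paper, though, that cancellation is what leaves a purely real combination $\Lambda$ of logarithms of positive algebraic numbers. Its positivity is then proved by explicit elementary bounds showing that the $h=2$ boundary term dominates the negative $h=4/\sqrt{10}$ term.
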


To the best of our knowledge, this is the first known existence result of a transcendental
epsilon multiplicity.  The construction does not insert a transcendental
constant into the equations.  The transcendental number is produced by a
volume function of a divisor, and the point is to show that this arithmetic
information survives after passing back to epsilon multiplicity in a local ring.

The structural input is a local-to-global formula for a single ideal.  Let $X$ be a
normal projective variety, let $L$ be an ample line bundle with standard
graded section ring $R(X,L)$, and let $D$ be an effective Cartier divisor.  We
view $H^0(X,cL-D)$ as a subspace of the degree-$c$ piece
$R(X,L)_c=H^0(X,cL)$ and set
$$
        J=R(X,L)\cdot H^0(X,cL-D).
$$
Under the positivity hypotheses stated in Theorem~\ref{thm:one-ideal}, this
single ideal has epsilon multiplicity given by a divisor-volume integral.

\begin{headthm}\label{thm:intro-one-ideal}
In the setting of Theorem~\ref{thm:one-ideal}, one has
$$
        \varepsilon(JR_{R_+})
        =
        (r+1)\int_\tau^c \vol_X(tL-D)\,dt,
$$
where $r=\dim X$ and
$$
        \tau=\inf\{t\geq 0\mid tL-D\text{ is pseudo-effective}\}.
$$
\end{headthm}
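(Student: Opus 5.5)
Write $R=R(X,L)$, $\m=R_+$ and $V_c=H^0(X,cL-D)\subseteq R_c$, so that $J=R\cdot V_c$ and $J^n=R\cdot V_c^n$. The plan is to compute, degree by degree, the graded pieces of
$$
        H^0_{\m}(R/J^n)=(J^n:\m^\infty)/J^n .
$$
The first step is to identify the saturation geometrically.
\begin{itemize}
\item For $m\gg0$, sections of $J^n$ in degree $m$ lie in the image of $H^0(X,(m-nc)L)\otimes V_c^{\otimes n}$.
\item Saturation with respect to $\m$ only adds elements that become members of $J^n$ after multiplying by high powers of $L$-sections. Because $L$ is ample, this is the same as taking global sections of the ideal sheaf generated, namely $\mathcal{B}^n\OO_X(-nD)\otimes\OO_X(mL)$, where $\mathcal{B}$ is the base ideal of $|cL-D|$.
\item Assume the positivity hypotheses of Theorem~\ref{thm:one-ideal} make $cL-D$ globally generated, or at least $\mathcal B=\OO_X$. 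Then
$$
        (J^n:\m^\infty)_m=H^0(X,mL-nD)\quad\text{for }m\ge nc.
$$
\end{itemize}
Alongside this I would check the low degrees. For $m<nc$ we have $J^n_m=0$, while the saturation still contains $H^0(X,mL-nD)$ whenever this is nonzero. This happens exactly when $m/n>\tau$, up to lower order.

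The second step is to control $J^n_m$ for $m\ge nc$. The goal is to show that the multiplication map
$$
        H^0((m-nc)L)\otimes H^0(cL-D)^{\otimes n}\to H^0(mL-nD)
$$
is surjective for $m\ge nc$, or surjective up to a defect that is $o(n^{r+1})$ when summed over $m$. I would obtain this from Castelnuovo--Mumford regularity or Mumford's lemma, using hypotheses such as $cL-D$ ample and globally generated together with vanishing of higher cohomology. In that case $H^0_\m(R/J^n)_m=0$ for $m\ge nc$. Consequently
$$
        \lambda\bigl(H^0_\m(R/J^n)\bigr)=\sum_{n\tau<m<nc}h^0(X,mL-nD)+O(n^{r}).
$$

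The third step is the asymptotics.
\begin{itemize}
\item Write $h^0(mL-nD)=h^0\bigl(n((m/n)L-D)\bigr)$, which equals $\frac{n^r}{r!}\vol_X\bigl((m/n)L-D\bigr)+o(n^r)$, with uniformity on compact subintervals.
\item Continuity of $\vol$ on the big cone, and its vanishing at the pseudo-effective boundary, let us replace the Riemann sum by an integral:
$$
        \lambda\bigl(H^0_\m(R/J^n)\bigr)=\frac{n^{r+1}}{r!}\int_\tau^c\vol_X(tL-D)\,dt+o(n^{r+1}).
$$
\item Normalise using $\dim R=r+1$. The convention is $\varepsilon=\limsup (\dim R)!\,\lambda/n^{\dim R}$, which gives the factor $(r+1)!/r!=r+1$.
\end{itemize}
To justify uniform convergence near $t=\tau$, I would bound $h^0$ above by a volume-type bound $C n^r$ and use dominated convergence, in the style of Fujita approximation or Okounkov bodies.

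I expect the main obstacle to be the second step: the surjectivity of the multiplication maps uniformly in $n$, which is needed so that no contribution survives for $m\ge nc$. I would first try to use the stated positivity hypotheses of Theorem~\ref{thm:one-ideal} to derive $(0$-)regularity of the relevant sheaves. If that fails, I would show that the cokernel contributions are $o(n^{r+1})$.
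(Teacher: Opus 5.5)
Your plan is correct and is essentially the paper's proof. Global generation of $cL-D$ identifies $(J^n)^{\rm sat}$ with $I_{nD}=\bigoplus_m H^0(X,mL-nD)$. The defect is then confined to the strip $\tau n\le m<cn$, where $(J^n)_m=0$, and a uniform volume asymptotic turns the resulting sum into a Riemann sum for $\frac{n^{r+1}}{r!}\int_\tau^c\vol_X(tL-D)\,dt$. The paper applies the uniform estimate of \cite{BGGJKM} on the whole segment including $t=\tau$; your alternative, uniformity on compact subintervals of the big range plus a crude $O(n^r)$ bound on a thin strip near $\tau$, also works.

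The step you expected to be the main obstacle is in fact settled directly by hypotheses (2) and (3) of Theorem~\ref{thm:one-ideal}. Normal generation of $cL-D$ gives $(J^n)_{cn}=H^0(X,n(cL-D))$, which regularity alone would not provide. The eventual $0$-regularity of $(cL-D)^{\otimes n}$ with respect to $L$ then propagates this equality to all $m\ge cn$ for $n\gg0$, so no $o(n^{r+1})$ fallback is needed.
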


Thus a numerical invariant defined by saturation of powers of a single ideal is
transported to the geometry of divisor volumes.  This is the mechanism behind
Theorem~\ref{thm:intro-main}.  It provides a bridge
$$
\text{multiplicity theory}
\longrightarrow
\text{section rings}
\longrightarrow
\text{divisor volumes}
\longrightarrow
\text{transcendence theory}.
$$
For the purposes of commutative algebra, the formula explains how epsilon
multiplicity can retain information that is invisible to Hilbert polynomials.
For the purposes of algebraic geometry, it places epsilon multiplicity among
asymptotic invariants controlled by positivity and volume functions.

It remains to produce a divisor-volume integral with a transcendental value
while keeping the positivity needed for Theorem~\ref{thm:one-ideal}.  We do this with a shifted projective-bundle construction inspired by
Borntr\"ager and Nickel \cite{BN}.  Their work supplies the circular numerical
model and the split projective-bundle strategy.  The particular triangle used
here is fixed explicitly in Section~\ref{sec:BN}, and every volume formula
needed in the proof is derived independently.  The shift moves the relevant
line bundles into the ample range required by the one-ideal formula, without
changing the underlying triangle that controls the disk integral.  The volume
integral is reduced to a weighted area integral over the intersection of a
fixed triangle with a moving disk.  Reversing the order of integration and
then rationalizing the three polar boundary integrals produces an explicit
algebraic linear combination of logarithms of positive algebraic numbers
whose value is positive.  Baker's theorem then gives transcendence.

A short filtration example in Section~\ref{sec:filtration-example} serves only
as a model: it shows that it is not difficult to find filtrations whose multiplicity is transcendental, while the main issue is to obtain such behavior from powers of a
single ideal.  The one-ideal formula and the shifted projective-bundle
construction solve precisely that problem.

The paper is organized as follows.  Section~\ref{prelim} collects the
necessary material on saturation, section rings, volumes, and Baker's theorem.
Section~\ref{sec:from-filtrations-to-one-ideal} proves the one-ideal
integral formula after the filtration model.  Section~\ref{sec:BN} constructs
the shifted projective-bundle family and verifies the positivity hypotheses.
Section~\ref{sec:arithmetic-input} evaluates the resulting divisor-volume
integral and isolates the transcendental logarithmic contribution.
Section~\ref{sec:main} proves Theorem~\ref{thm:intro-main}.  The final section
records a few questions about the arithmetic range of epsilon multiplicity.

\section{Preliminaries}\label{prelim}

This section collects the algebraic, geometric, and arithmetic inputs used in the proof of the
one-ideal formula and in the final argument for transcendence of epsilon multiplicity. We have written the material in a
slightly expansive form, because the proof later passes between three languages: local algebra, divisorial section rings, and divisor-volume integrals.

Throughout this paper, $k$ denotes a field.  Varieties are assumed to be integral and projective over $k$
unless explicitly stated otherwise.  We write divisors and tensor powers of line bundles additively.
Thus, if $L$ is a line bundle on $X$, then $mL$ refers to the sheaf $L^{\otimes m}$, and if $D$ is a Cartier divisor then an expression such as $mL-D$ is defined as the sheaf
$$
L^{\otimes m}\otimes \OO_X(-D).
$$
With this convention,
$$
R(X,L):=\bigoplus_{m\ge 0}H^0(X,mL)
$$
denotes the section ring of $L$.  When $D$ is a Cartier divisor, we often write $H^0(X,D)$ for
$H^0(X,\OO_X(D))$.

\subsection{Epsilon multiplicity and saturation}

Let $(R,\m)$ be a Noetherian local ring of dimension $d$, and let $I\subseteq R$ be an ideal.
The epsilon multiplicity of $I$ is defined as the following limit superior
$$
\eps(I)
=
d!\limsup_{n\to\infty}
\frac{\lambda_R\bigl(H^0_{\m}(R/I^n)\bigr)}{n^d}.
$$
Here
$$
H^0_{\m}(R/I^n)
=
\{\, x+I^n\in R/I^n\mid \m^i(x+I^n)=0
\text{ for some }i\geq 0\,\}
$$
is the $\m$-torsion submodule of $R/I^n$.  Equivalently,
$$
H^0_{\m}(R/I^n)\cong (I^n:\m^\infty)/I^n.
$$
In particular
$$
\eps(I)
=
d!\limsup_{n\to\infty}
\frac{\lambda_R\bigl((I^n:\m^\infty)/I^n\bigr)}{n^d}.
$$
In the setting used below, the limit superior is a limit by Theorem~\ref{thm:one-ideal}.

We shall use this invariant in a graded setting.  Let
$$
S=\bigoplus_{m\ge 0}S_m
$$
be a standard graded domain over $k$, and set
$$
S_+:=\bigoplus_{m>0}S_m.
$$
If $J\subseteq S$ is a homogeneous ideal, its saturation with respect to $S_+$ is
$$
J^{\rm sat}:=J:S_+^\infty=\bigcup_{i\ge 1}(J:S_+^i).
$$
With this notation we may rewrite zero-th local cohomology as follows
$$
H^0_{S_+}(S/J)\cong J^{\rm sat}/J.
$$
If $N$ is a finite-length graded $S$-module, then localizing at $S_+$ does not change its length:
$$
\lambda_S(N)=\lambda_{S_{S_+}}(N_{S_+}).
$$
Consequently, if $d=\dim S$, then
$$
\eps(JS_{S_+})
=
d!\limsup_{n\to\infty}
\frac{\lambda_S\bigl((J^n)^{\rm sat}/J^n\bigr)}{n^d}.
$$
This is the form of epsilon multiplicity that will be used in the one-ideal formula.

More generally one defines the epsilon multiplicity of a filtration of ideals.  A filtration of ideals in a local
ring $(R,\m)$ is a family of ideals $\{J_n\}_{n\ge 0}$ such that $J_0=R$, which is graded, in the sense that $J_mJ_n\subseteq J_{m+n}$
for all $m$ and $n$, and decreases: $J_1\supseteq J_2\supseteq J_3\supseteq\cdots$. Cutkosky and Sarkar define the epsilon multiplicity of a filtration $\{J_n\}$ in \cite{CutSarkar} as the following limit superior
$$
\eps(\{J_n\})
=
d!\limsup_{n\to\infty}
\frac{\lambda_R\bigl(H^0_{\m}(R/J_n)\bigr)}{n^d}.
$$
The filtration example in Section~\ref{sec:filtration-example} should be viewed as a model:
transcendence is easy to see at the level of filtrations, while the main work of the paper
is to realize such arithmetic behavior from a single ideal.

\subsection{Section rings and divisorial ideals}\label{subsec:section-rings-divisorial-ideals}

Let $X$ be a normal projective variety, and let $L$ be an ample line bundle such that the section
ring
$$
S:=R(X,L)=\bigoplus_{m\ge 0}H^0(X,mL)
$$
is standard graded.  Then $L$ is globally generated and
$$
X\cong \Proj S.
$$

Indeed note that $S=R(X,L)$ is generated in degree one, so the linear system $|L|$ has no
base points: otherwise every section of every $mL$ would vanish at the same
point, contradicting ampleness. Hence $L$ is globally generated. The canonical
morphism
$$
X \longrightarrow \Proj R(X,L)
$$
associated to an ample invertible sheaf is an open immersion with dense image;
see \cite[Tag~01Q1]{Stacks}. Since $S$ is generated by $S_1$ and $L$ is
globally generated, the standard opens $D_+(s)$, $s\in S_1$, are all in the image.
Thus the morphism is surjective, and hence $X\cong \Proj S$.

For a coherent sheaf $\F$ on $X$, put
$$
\Gamma_*(\F)
:=
\bigoplus_{m\ge 0}H^0(X,\F\otimes L^{\otimes m}).
$$

Let $D$ be an effective Cartier divisor on $X$.  We define the corresponding divisorial ideal in
the section ring by
$$
I_D
:=
\Gamma_*(\OO_X(-D))
=
\bigoplus_{m\ge 0}H^0(X,mL-D)
\subseteq R(X,L).
$$
More generally,
$$
I_{nD}
:=
\Gamma_*(\OO_X(-nD))
=
\bigoplus_{m\ge 0}H^0(X,mL-nD).
$$
Since $D$ is Cartier, $\OO_X(-D)$ is invertible, and hence
$$
\widetilde{I_D}\cong \OO_X(-D),
\qquad
\widetilde{I_{nD}}\cong \OO_X(-nD).
$$
Moreover, sheafification commutes with tensor products of invertible sheaves on standard opens,
so that
$$
\widetilde{I_D^n}\cong \OO_X(-nD).
$$
Thus the product $I_D^n$ and the divisorial module $I_{nD}$ define the same coherent ideal sheaf
on $X$.

The difference between these two graded ideals is precisely a saturation phenomenon.  If
$I\subseteq S$ is a homogeneous ideal, then, since $S$ is standard graded, the
homogeneous ideal corresponding to $\widetilde I$ is the saturation
$I:S_+^\infty$; see \cite[Tag~084M]{Stacks}.  Equivalently, in the present
section-ring setting,
$$
\Gamma_*(\widetilde I)=I:S_+^\infty.
$$
Applying this to the homogeneous ideal $I=I_D^n$ gives
$$
I_D^n:S_+^\infty
=
\Gamma_*(\widetilde{I_D^n})
=
\Gamma_*(\OO_X(-nD))
=
I_{nD}.
$$
This observation is the basic bridge between the graded local cohomology in epsilon
multiplicity and the geometry of divisors on $X$.

\subsection{Global generation, regularity, and multiplication of sections}

We next isolate the positivity hypotheses used to compare powers of one degree-generated ideal
with the corresponding divisorial modules.

Recall first that a line bundle $\Mcal$ on a projective variety $X$ is globally generated if the
evaluation map
$$
H^0(X,\Mcal)\otimes_k \OO_X
\longrightarrow
\Mcal
$$
is surjective.  Equivalently, the complete linear system $|\Mcal|$ has no base points.  We emphasize
the evaluation-map formulation because it is exactly what is used below for the line bundle
$L^{\otimes c}\otimes \OO_X(-D)$.

Let $L$ be a globally generated ample line bundle on $X$.  A coherent sheaf $\F$ is said to be
$0$-regular with respect to $L$ if
$$
H^i(X,\F\otimes L^{-i})=0
\qquad\text{for all }i>0.
$$
We shall use the following standard consequences of Castelnuovo--Mumford regularity; see
Lazarsfeld \cite[Theorem~1.8.5]{Lazarsfeld}.

\begin{theorem}\label{thm:regularity-package}
Let $L$ be a globally generated ample line bundle on a projective variety $X$, and let $\F$ be a
coherent sheaf on $X$.
\begin{enumerate}
\item If $\F$ is $0$-regular with respect to $L$, then $\F\otimes L^m$ is globally generated for
every $m\ge 0$.
\item If $\F$ is $0$-regular with respect to $L$, then the multiplication maps
$$
H^0(X,\F\otimes L^m)\otimes H^0(X,L)
\longrightarrow
H^0(X,\F\otimes L^{m+1})
$$
are surjective for all $m\ge 0$.
\item By iterating the multiplication maps in $(2)$, if $\F$ is $0$-regular with respect to $L$, then the graded module
$$
\Gamma_*(\F)
=
\bigoplus_{m\ge 0}H^0(X,\F\otimes L^{\otimes m})
$$
is generated in degree $0$.  In particular, for every $m\ge 0$, the natural multiplication map
$$
H^0(X,\F)\otimes H^0(X,mL)
\longrightarrow
H^0(X,\F\otimes L^{\otimes m})
$$
is surjective.
\end{enumerate}
\end{theorem}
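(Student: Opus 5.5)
The plan is to prove the three parts in order. The main tool is the Koszul-type resolution attached to a basepoint-free linear system, i.e.\ Mumford's classical argument; for (1) we also use Serre vanishing.

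\textbf{Step 1: $m$-regularity propagates.} Set $V=H^0(X,L)$. Since $L$ is globally generated, evaluation gives a surjection $V\otimes\OO_X\to L$. The associated Koszul complex
$$
\cdots\to \wedge^2V\otimes L^{-2}\to V\otimes L^{-1}\to \OO_X\to 0
$$
is exact, because locally $L\cong\OO_X$ and the sections of $V$ generate the unit ideal. We tensor this complex with $\F\otimes L^{m}$ and chase cohomology through the short exact pieces, in the standard way. This shows: if $\F$ is $m$-regular, then $\F$ is $(m+1)$-regular. By induction, $\F$ is $m'$-regular for every $m'\ge 0$.

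\textbf{Step 2: part (2).} Let $M$ be the kernel of $V\otimes\OO_X\to L$, and tensor the sequence $0\to M\to V\otimes \OO_X\to L\to 0$ with $\F\otimes L^{m}$. Surjectivity of the multiplication map follows once $H^1(M\otimes\F\otimes L^{m})=0$. Resolving $M$ by the truncated Koszul complex $\wedge^{j+1}V\otimes L^{-j}$, $j\ge 1$, reduces this vanishing to
$$
H^{j}(X,\F\otimes L^{m-j})=0 \qquad\text{for } j\ge 1,
$$
which is exactly $m$-regularity. This vanishing holds by Step 1. I expect this diagram chase to be the main technical point. If it becomes unwieldy, the first fallback is to restrict to a general hyperplane section $H\in|L|$ and use the sequence $0\to\F(-1)\to\F\to\F|_H\to 0$, arguing by induction on $\dim \operatorname{Supp}\F$, as in Mumford. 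This requires $H$ to avoid the associated points of $\F$, which is possible after extending $k$ to an infinite field; that extension is harmless by flat base change.

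\textbf{Step 3: part (1).} By Serre vanishing, $\F\otimes L^{N}$ is globally generated for $N\gg0$. Iterating (2), the map $H^0(\F\otimes L^m)\otimes H^0(L^{N-m})\to H^0(\F\otimes L^{N})$ is surjective. Hence the sections of $\F\otimes L^{m}$, multiplied by sections of $L^{N-m}$, generate $\F\otimes L^N$ at every point $x$. Since $L$ is globally generated, we may choose a section of $L^{N-m}$ that does not vanish at $x$. It follows that the sections of $\F\otimes L^{m}$ generate the stalk at $x$ by Nakayama's lemma, which gives (1).

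\textbf{Step 4: part (3).} Composing the surjections from (2) for $m, m+1,\dots$ shows that $H^0(\F)\otimes V^{\otimes m}\to H^0(\F\otimes L^m)$ is surjective. This map factors through $H^0(\F)\otimes H^0(mL)$, so that map is surjective as well. Therefore $\Gamma_*(\F)$ is generated in degree $0$.

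Alternatively, all four steps can be cited directly from \cite[Theorem~1.8.5]{Lazarsfeld}.
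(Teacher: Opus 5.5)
Your argument is correct and is essentially the proof behind the paper's citation: the paper gives no argument of its own and defers to \cite[Theorem~1.8.5]{Lazarsfeld}, whose proof is the Koszul-complex argument you sketch, with the same Serre-vanishing step for global generation. One small slip is in Step~1: to get $H^i(X,\F\otimes L^{m+1-i})=0$ you should twist the Koszul complex by $\F\otimes L^{m+1-i}$ rather than by $\F\otimes L^{m}$, and then the chase needs exactly the $m$-regularity vanishings $H^{i+j-1}(X,\F\otimes L^{m-(i+j-1)})=0$ for $j\geq 1$.
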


We also recall the notion of normal generation.  A globally generated line bundle $\Mcal$ is normally
generated if the multiplication maps
$$
\operatorname{Sym}^n H^0(X,\Mcal)
\longrightarrow
H^0(X,\Mcal^{\otimes n})
$$
are surjective for all $n\ge 1$.  If $\Mcal$ is very ample, normal generation is the
degree-generation part of projective normality for the embedding defined by the complete linear
series $|\Mcal|$.  In the proof below, only the displayed surjectivity is used.

For later use, we record the asymptotic source of these properties.  If $A$ is ample on a normal
projective variety $X$, then $A^{\otimes m}$ is very ample and normally generated for all
$m\gg0$; moreover, for $m\gg0$ the embedding defined by $A^{\otimes m}$ is projectively
normal.  This follows from Serre vanishing together with the very-ampleness criterion and
Mumford's regularity theorem; see Lazarsfeld
\cite[Theorems~1.2.6 and~1.8.5]{Lazarsfeld}.

\subsection{The degree-\texorpdfstring{$c$}{c} ideal associated to divisorial data}

The next proposition is the point where the preceding conventions become essential.  It explains
why the ideal generated by $H^0(X,cL-D)$ behaves like the divisor $D$ after saturation, even
though it is generated in degree $c$.

\begin{proposition}\label{prop:generation-equality}
Let $X$ be a normal projective variety, let $L$ be an ample line bundle with standard graded
section ring $S=R(X,L)$, let $D$ be an effective Cartier divisor, and fix an integer $c\ge 1$.
Set
$$
\Mcal:=L^{\otimes c}\otimes \OO_X(-D).
$$
Equivalently, in the additive notation fixed above, $\Mcal=cL-D$.  Since $D$ is effective,
multiplication by the canonical section of $\OO_X(D)$ gives an inclusion
$$
H^0(X,\Mcal)=H^0(X,cL-D)
\hookrightarrow
H^0(X,cL)=S_c.
$$
Throughout this proposition, we regard $H^0(X,\Mcal)$ as a subspace of the degree-$c$ piece of
$S$ through this inclusion, and we define
$$
J:=S\cdot H^0(X,\Mcal)\subseteq S.
$$
Thus $J$ is the homogeneous ideal generated by this degree-$c$ vector space.  In particular, even
if $\Mcal\cong \OO_X$, the sections of $H^0(X,\Mcal)=H^0(X,cL-D)$ are placed in degree $c$, not in degree $0$.

Assume that the line bundle $\Mcal$ is globally generated.
\begin{enumerate}
\item The associated sheaf of $J$ is
$$
\widetilde J\cong \OO_X(-D).
$$
Consequently, for every $n\ge 1$ one has
$$
\widetilde{J^n}\cong \OO_X(-nD)
\qquad\text{and}\qquad
(J^n)^{\rm sat}=I_{nD}.
$$
\item The ideal $J$ is generated in degree $c$.  In particular,
$$
(J^n)_m=0
\qquad\text{for all }n\ge 1\text{ and }m<cn.
$$
\item If, in addition, $\Mcal$ is normally generated and $\Mcal^{\otimes n}$ is $0$-regular with respect to
$L$ for all $n\gg 0$, then for all $n\gg 0$ and all $m\ge cn$ one has
$$
(J^n)_m=H^0(X,mL-nD).
$$
\end{enumerate}
\end{proposition}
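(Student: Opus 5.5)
We prove the three parts in order, working throughout on the standard opens $D_+(s)$, $s\in S_1$, which cover $X\cong\Proj S$ (Subsection~\ref{subsec:section-rings-divisorial-ideals}). The approach is to compare $J$ with $I_D$ locally on these opens, and then to deduce the saturation and degree statements from the identities recorded earlier in that subsection.

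For (1), note first that $J\subseteq I_D$: every element of $J_m$ is a sum of products $a\sigma$ with $a\in S_{m-c}$ and $\sigma\in H^0(X,cL-D)$, and such a product is a section of $mL$ vanishing along $D$. Hence $\widetilde J\subseteq \widetilde{I_D}\cong\OO_X(-D)$.

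For the reverse inclusion, fix $s\in S_1$. On $U=D_+(s)$ we have $\widetilde J(U)=J_{(s)}$, which contains the degree-zero fractions $\sigma/s^c$ for $\sigma\in H^0(X,\Mcal)$. Trivialize $L$ on $U$ by $s$. Then $\sigma/s^c$ is the local function obtained from $\sigma$ viewed as a section of $\OO_X(-D)$. Global generation of $\Mcal$ says that the sections $\sigma$ generate $\Mcal|_U\cong\OO_X(-D)|_U$ (after twisting by $s^{-c}$). Therefore the functions $\sigma/s^c$ generate the ideal sheaf $\OO_X(-D)|_U$, and so $\widetilde J|_U=\OO_X(-D)|_U$.

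Since sheafification is compatible with products of ideals on standard opens, we get $\widetilde{J^n}=\widetilde J^{\,n}=\OO_X(-nD)$. Applying $\Gamma_*(\widetilde I)=I:S_+^\infty$ (\cite[Tag~084M]{Stacks}) then gives $(J^n)^{\rm sat}=\Gamma_*(\OO_X(-nD))=I_{nD}$.

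Part (2) is immediate. $J$ is generated by the homogeneous elements of $H^0(X,\Mcal)\subseteq S_c$, so $J^n$ is generated by products of $n$ of them, all of degree $cn$. Since $S$ is nonnegatively graded, $(J^n)_m=S_{m-cn}\cdot(\text{generators})=0$ whenever $m<cn$.

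For (3), we have $(J^n)_m\subseteq (I_{nD})_m=H^0(X,mL-nD)$ by (1). For the reverse inclusion, write $m=cn+j$ with $j\ge 0$. The degree-$m$ part of $J^n$ is the image of
$$
\Sym^n H^0(X,\Mcal)\otimes H^0(X,jL)\longrightarrow H^0(X,\Mcal^{\otimes n}\otimes L^{\otimes j})=H^0(X,mL-nD),
$$
where we identify $\Mcal^{\otimes n}\otimes L^{j}$ with $mL-nD$ compatibly with the embedding into $S_m$ (multiplication by the canonical section of $nD$). Factor this map as two steps. First, $\Sym^nH^0(\Mcal)\to H^0(\Mcal^{\otimes n})$ is surjective by normal generation. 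Second, $H^0(\Mcal^{\otimes n})\otimes H^0(jL)\to H^0(\Mcal^{\otimes n}\otimes L^{j})$ is surjective for $n\gg0$ by Theorem~\ref{thm:regularity-package}(3) applied to $\F=\Mcal^{\otimes n}$, which is $0$-regular by hypothesis. The composite is therefore surjective, which gives equality.

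The main obstacle is bookkeeping rather than mathematics: we must check that the identifications of $H^0(X,cL-D)$ inside $S_c$ are compatible with multiplication. Concretely, the product of the embedded sections equals the embedding of the product via the canonical section $s_D^{\,n}$ of $\OO_X(nD)$. This holds because multiplication by $s_D$ is a morphism of invertible sheaves, and the maps commute with tensor products.
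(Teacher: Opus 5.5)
Your proposal is correct and follows essentially the same route as the paper. In part (1), global generation identifies $\widetilde J$ with $\OO_X(-D)$ and the graded ideal transform then gives $(J^n)^{\rm sat}=\Gamma_*(\widetilde{J^n})=I_{nD}$. In part (3), normal generation in degree $cn$ is followed by the regularity surjection $H^0(X,\Mcal^{\otimes n})\otimes H^0(X,(m-cn)L)\to H^0(X,mL-nD)$. Your local check on $D_+(s)$, using $J\subseteq I_D$ for one inclusion and that $J_{(s)}$ is generated by the fractions $\sigma/s^c$ for the other, is simply a more explicit version of the paper's statement that $\widetilde J$ is the image of $H^0(X,\Mcal)\otimes L^{-c}\to\OO_X(-D)\subseteq\OO_X$.
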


\begin{proof}
Because $\Mcal$ is globally generated, the evaluation map gives a surjection
$$
H^0(X,\Mcal)\otimes \OO_X
\twoheadrightarrow
\Mcal.
$$
Twisting by $L^{-c}$ gives
$$
H^0(X,\Mcal)\otimes L^{-c}
\twoheadrightarrow
\Mcal\otimes L^{-c}=\OO_X(-D).
$$
Under the inclusion
$$
H^0(X,\Mcal)\subseteq S_c=H^0(X,cL),
$$
these sections generate $J$ in degree $c$.  After passing to $\Proj S$, the degree shift by $c$
contributes the factor $L^{-c}$, and hence $\widetilde J$ is the image of the last displayed
map inside $\OO_X$.  Since the target is the invertible ideal sheaf $\OO_X(-D)$, we obtain
$$
\widetilde J\cong \OO_X(-D).
$$
Because $\OO_X(-D)$ is invertible,
$$
\widetilde{J^n}\cong (\widetilde J)^n\cong \OO_X(-nD).
$$
The graded ideal transform then gives
$$
(J^n)^{\rm sat}
=
\Gamma_*(\widetilde{J^n})
=
\Gamma_*(\OO_X(-nD))
=
I_{nD}.
$$
This proves part $(1)$.

Part $(2)$ follows directly from the definition: $J$ is generated by a subspace of $S_c$, so
every element of $J^n$ has degree at least $cn$.

Now assume the additional hypotheses.  Since $\Mcal$ is normally generated, the map
$$
\operatorname{Sym}^n H^0(X,\Mcal)
\twoheadrightarrow
H^0(X,\Mcal^{\otimes n})
$$
is surjective.  Therefore the degree-$cn$ piece of $J^n$ is
$$
(J^n)_{cn}
=
H^0(X,\Mcal^{\otimes n})
=
H^0(X,cnL-nD).
$$
Let $m\ge cn$.  For $n\gg 0$, the sheaf $\Mcal^{\otimes n}$ is $0$-regular with respect to $L$.
By Theorem~\ref{thm:regularity-package}, the multiplication map
$$
H^0(X,\Mcal^{\otimes n})\otimes H^0(X,(m-cn)L)
\twoheadrightarrow
H^0(X,\Mcal^{\otimes n}\otimes L^{\otimes(m-cn)})
=
H^0(X,mL-nD)
$$
is surjective.  The source maps into $(J^n)_m$, so
$$
H^0(X,mL-nD)\subseteq (J^n)_m.
$$
The reverse inclusion follows from $J\subseteq I_D$, hence
$$
J^n\subseteq I_D^n\subseteq I_{nD},
$$
and the degree-$m$ piece of $I_{nD}$ is exactly $H^0(X,mL-nD)$.  Thus
$$
(J^n)_m=H^0(X,mL-nD)
$$
for all $m\ge cn$ and all $n\gg 0$.
\end{proof}

\begin{remark}
The hypothesis that $\Mcal$ is globally generated is used only to identify the sheaf generated by the
degree-$c$ sections with $\OO_X(-D)$.  The normal-generation hypothesis controls the degree
$cn$ piece of $J^n$, and the eventual regularity hypothesis for $\Mcal^{\otimes n}$ then propagates this equality to all higher
degrees.  This separation of roles is important: global generation alone does not imply normal
generation.
\end{remark}

\subsection{Volumes, thresholds, and uniform asymptotics}

Let $X$ be a projective variety of dimension $r$.  For a Cartier divisor $A$ on $X$, its volume is
$$
\vol_X(A)
:=
\limsup_{m\to\infty}
\frac{h^0(X,mA)}{m^r/r!}.
$$
The definition is first made for Cartier divisors because Cartier divisors determine line bundles,
and the volume measures the asymptotic growth of spaces of global sections of these line
bundles.  The function $\vol_X$ depends only on the numerical class of $A$ and extends
continuously to $N^1(X)_\RR$; see Lazarsfeld \cite[Theorem~2.2.44]{Lazarsfeld}.  It is
homogeneous of degree $r$:
$$
\vol_X(aA)=a^r\vol_X(A)
\qquad
(a\in\RR_{\ge 0}),
$$
and if $A$ is nef, then
$$
\vol_X(A)=A^r.
$$

Now let $L$ be ample and let $D$ be an effective Cartier divisor on $X$.  We define the
pseudo-effective threshold of $D$ with respect to $L$ by
$$
\tau=\tau_L(D)
:=
\inf\{\,t\ge 0\mid tL-D\text{ is pseudo-effective}\,\}.
$$
This number is the first point at which the ray $tL-D$ meets the pseudo-effective cone.  In
the one-ideal setting below, $cL-D$ is globally generated and hence
pseudo-effective, so $\tau\leq c$.  In particular, if $m/n<\tau$, then
$$
\frac{m}{n}L-D
$$
is not pseudo-effective.  Hence $mL-nD$ is not effective, and therefore
$$
H^0(X,mL-nD)=0.
$$

For the one-ideal formula, the relevant range is the strip
$$
\tau n\le m<cn.
$$
In this strip one may write
$$
mL-nD
=
n\left(\frac{m}{n}L-D\right).
$$
Thus the leading asymptotic behavior of $h^0(X,mL-nD)$ is governed by the volume of the
class
$$
\frac{m}{n}L-D.
$$
More precisely, on the compact segment
$$
K:=\{\,tL-D\mid \tau\le t\le c\,\}\subset N^1(X)_\RR,
$$
we use the standard uniform asymptotic estimate for sections.  There is one
minor endpoint issue when $\tau=0$: the strip then contains the single term
$m=0$, whereas the positive-coefficient form of
\cite[Proposition~3.5.1]{BGGJKM} is applied only for $m\geq1$.  We separate
this term.  Since $D$ is effective, $\OO_X(-nD)\subseteq\OO_X$, and hence
$$
h^0(X,-nD)\leq h^0(X,\OO_X)=O(1).
$$
For the remaining integers $m$ with $\tau n\leq m<cn$ and $m\geq1$, apply
\cite[Proposition~3.5.1]{BGGJKM} to the two fixed divisors $L$ and $-D$.
Writing $\rho_0(N)=o(N^r)$ for the error function supplied there, one obtains
$$
\left|
h^0(X,mL-nD)
-
\frac{n^r}{r!}\vol_X\left(\frac{m}{n}L-D\right)
\right|
\leq \rho_0(m+n).
$$
Set
$$
e(N):=\frac{\rho_0(N)}{N^r}
\qquad\text{and}\qquad
\widehat\rho(n):=(c+1)^r n^r\sup_{N\geq n}e(N).
$$
Since $e(N)\to0$, one has $\widehat\rho(n)=o(n^r)$.  Moreover,
$n\leq m+n\leq(c+1)n$ throughout the strip, and hence
$$
\rho_0(m+n)\leq\widehat\rho(n)
$$
uniformly in $m$.  The separated endpoint contributes only $O(1)$, and consequently
$$
\sum_{\tau n\le m<cn}h^0(X,mL-nD)
=
\frac{n^r}{r!}
\sum_{\substack{\tau n\le m<cn\\ m\geq1}}
\vol_X\left(\frac{m}{n}L-D\right)
+
o(n^{r+1}).
$$
After dividing by $n^{r+1}$, the remaining expression is the Riemann sum
$$
\frac1n
\sum_{\tau n\le m<cn}
\vol_X\left(\frac{m}{n}L-D\right),
$$
which converges, by continuity of the volume function, to
$$
\int_{\tau}^{c}\vol_X(tL-D)\,dt.
$$
The possible rounding at the endpoints contributes only $O(1)$ summands and does not affect
the limit.

\subsection{A logarithmic transcendence input}

We shall use the following form of Baker's theorem on linear forms in logarithms; see
Waldschmidt \cite[Theorem~11.1]{Waldschmidt}.  Throughout, a logarithm of an algebraic
number means a chosen complex logarithm.

\begin{theorem}\label{thm:Baker}
Let $\ell_1,\dots,\ell_m\in\CC$ be $\QQ$-linearly independent logarithms of algebraic numbers.
Then
$$
1,\ell_1,\dots,\ell_m
$$
are linearly independent over $\overline{\QQ}$.
\end{theorem}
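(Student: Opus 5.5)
This statement is quoted from the literature rather than proved here: it is \cite[Theorem~11.1]{Waldschmidt}, the inhomogeneous form of Baker's theorem. In the paper we simply invoke it. If we had to outline the argument ourselves, we would follow Baker's method of auxiliary functions in several variables, in the form presented by Waldschmidt, as follows.

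First we reduce to a nondegenerate situation. Suppose, for contradiction, that
$$
\beta_0+\beta_1\ell_1+\cdots+\beta_m\ell_m=0
$$
with $\beta_i\in\overline{\QQ}$ not all zero. Put $\alpha_i=e^{\ell_i}\in\overline{\QQ}^\times$. After discarding indices with $\beta_i=0$ and using $\QQ$-linear independence, we may assume every $\beta_i\neq 0$. If $\beta_m\neq0$, we solve for $\ell_m$. We then study the function of several complex variables
$$
\Phi(z_0,\dots,z_{m-1})
=
\sum_{\lambda}p(\lambda)\,
z_0^{\lambda_{-1}}\,
e^{\lambda_0 z_0}
\prod_{i=1}^{m-1}\alpha_i^{\lambda_i z_i}\,
\alpha_m^{\lambda_m(\cdots)} .
$$
Here the exponent of $\alpha_m$ is rewritten using the relation, so that $\Phi$ restricted to integer points on a suitable line takes values in a fixed number field. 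More precisely, the values of $\Phi$ at integer points $s=0,1,\dots,S$, together with its derivatives up to order $T$ along the relevant direction, are algebraic numbers of controlled degree and height.

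The key steps, in order, are these.
\begin{enumerate}
\item \textbf{Construction.} Use Siegel's lemma to choose integer coefficients $p(\lambda)$, not all zero and of controlled size, so that $\Phi$ and its derivatives up to order $T$ vanish at the points $s\le S$. The parameters $L_{-1},L_0,\dots,L_m,S,T$ are balanced so that the number of unknowns exceeds the number of equations.
\item \textbf{Extrapolation.} Combine the Schwarz lemma, applied to the entire function of one variable obtained by restriction, with Liouville's inequality for nonzero algebraic numbers of bounded height. This shows the vanishing persists to a larger set of points and/or higher derivative orders.
\item \textbf{Zero estimate.} Use a multiplicity estimate on the group $\mathbb{G}_a\times\mathbb{G}_m^{m}$ (Philippon or Masser--W\"ustholz) to show that so many zeros force the auxiliary polynomial to vanish identically, unless there is an algebraic subgroup obstruction. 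Such an obstruction would yield a nontrivial multiplicative relation among the $\alpha_i$, hence a $\QQ$-linear relation among the $\ell_i$ modulo $2\pi i\,\ZZ$. We exclude it by the hypothesis, after the standard reduction that absorbs $2\pi i$ into the list of logarithms.
\end{enumerate}
Step (3) contradicts the non-vanishing of $p$.

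The main obstacle is the zero estimate. The analytic steps (1) and (2) are routine bookkeeping with heights. The hard part is proving that a nonzero exponential polynomial cannot vanish to high order at many points of the group without a subgroup obstruction, and then converting any such obstruction back into a rational linear dependence among $\ell_1,\dots,\ell_m$. Given the depth of this step, in the paper we rely on the cited reference rather than reproducing the argument.
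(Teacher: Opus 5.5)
Your proposal matches the paper, which also gives no proof of this statement and simply invokes Waldschmidt's Theorem~11.1, so the citation is the entire argument. Your optional sketch is not needed, and its obstruction step should not be read literally: the hypothesis constrains the $\ell_i$ in $\CC$, not the $\alpha_i$ (for instance $\ell_1=\pi i$ satisfies it although $\alpha_1=-1$ is torsion, and adjoining $2\pi i=\log 1$ to the list does not remove such relations), so in the standard treatments the independence enters through the tangent space, e.g.\ an algebraic subgroup of $\mathbb{G}_a\times\mathbb{G}_m^m$ whose Lie algebra contains $(1,\ell_1,\dots,\ell_m)$ must be the whole group.
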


We shall use the following immediate consequence.

\begin{corollary}\label{cor:Baker-linear-form}
Let $\ell_1,\dots,\ell_N$ be logarithms of algebraic numbers, and let
$A_0,A_1,\dots,A_N\in\overline{\QQ}$.  Put
$$
\Lambda
=
A_0+\sum_{j=1}^N A_j\ell_j.
$$
If the logarithmic part
$$
\sum_{j=1}^N A_j\ell_j
$$
is nonzero, then $\Lambda$ is transcendental.
\end{corollary}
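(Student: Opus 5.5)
We derive the corollary from Theorem~\ref{thm:Baker} by extracting a $\QQ$-linearly independent subfamily of the logarithms and rewriting the linear form in terms of that basis. Let $V\subseteq\CC$ be the $\QQ$-vector space spanned by $\ell_1,\dots,\ell_N$. We choose a maximal $\QQ$-linearly independent subset, say $\ell_{j_1},\dots,\ell_{j_s}$, after reindexing. Each $\ell_j$ is then a $\QQ$-linear combination $\ell_j=\sum_{k=1}^s q_{jk}\ell_{j_k}$ with $q_{jk}\in\QQ\subseteq\overline{\QQ}$. Substituting, we get
$$
\Lambda
=
A_0+\sum_{k=1}^s B_k\,\ell_{j_k},
\qquad
B_k:=\sum_{j=1}^N A_j q_{jk}\in\overline{\QQ}.
$$
The logarithmic part equals $\sum_k B_k\ell_{j_k}$. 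Since it is nonzero by hypothesis, at least one $B_k\neq 0$; in particular $s\ge1$.

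Next we argue by contradiction. Suppose $\Lambda=\beta$ is algebraic. Then
$$
(A_0-\beta)\cdot 1+\sum_{k=1}^s B_k\ell_{j_k}=0
$$
is a $\overline{\QQ}$-linear relation among $1,\ell_{j_1},\dots,\ell_{j_s}$. The logarithms $\ell_{j_1},\dots,\ell_{j_s}$ are logarithms of algebraic numbers and are $\QQ$-linearly independent. So Theorem~\ref{thm:Baker} says $1,\ell_{j_1},\dots,\ell_{j_s}$ are $\overline{\QQ}$-linearly independent, and hence all coefficients vanish. In particular every $B_k=0$, which contradicts the previous paragraph. Therefore $\Lambda$ is transcendental.

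No step is a real obstacle. The only point needing care is that the rewriting coefficients $q_{jk}$ are rational, so the $B_k$ stay in $\overline{\QQ}$, and that nonvanishing of the logarithmic part passes to some $B_k$. Both follow because the substitution does not change the value of the sum $\sum_j A_j\ell_j$. The degenerate case where all $\ell_j=0$ is excluded by the hypothesis, since then the logarithmic part would be zero.
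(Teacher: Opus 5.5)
Your proposal is correct and follows the paper's proof essentially verbatim: pass to a maximal $\QQ$-linearly independent subfamily, rewrite the logarithmic part with coefficients $B_k\in\overline{\QQ}$ that are not all zero, and note that algebraicity of $\Lambda$ would give a nontrivial $\overline{\QQ}$-relation among $1,u_1,\dots,u_s$, contradicting Theorem~\ref{thm:Baker}. Your explicit remark that $s\geq 1$ is a small extra that the paper leaves implicit.
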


\begin{proof}
Choose a maximal $\QQ$-linearly independent subfamily $u_1,\dots,u_s$ among the logarithms
that occur.  Every $\ell_j$ is a $\QQ$-linear combination of the $u_i$, so the logarithmic part can
be rewritten as
$$
\sum_{i=1}^s B_i u_i
$$
with $B_i\in\overline{\QQ}$.  By assumption, not all $B_i$ are zero.  If $\Lambda$ were algebraic,
then
$$
(A_0-\Lambda)+\sum_{i=1}^s B_i u_i=0
$$
would be a nontrivial $\overline{\QQ}$-linear relation among
$$
1,u_1,\dots,u_s,
$$
contradicting Theorem~\ref{thm:Baker}.  Hence $\Lambda$ is transcendental.
\end{proof}

In Section~\ref{sec:arithmetic-input}, this corollary is applied to a finite
$\overline{\QQ}$-linear combination of real logarithms of positive algebraic
numbers.  The point is that one does not need the displayed logarithms to be
$\QQ$-linearly independent.  It is enough to pass to a maximal independent
subfamily and verify directly that the logarithmic part of the final expression
is nonzero.

\section{From filtrations to a one-ideal volume formula}\label{sec:from-filtrations-to-one-ideal}

The purpose of this section is to explain the passage from a flexible asymptotic construction to the
single-ideal formula that drives the rest of the paper.  The first subsection gives a filtration whose
epsilon multiplicity is the transcendental number $\pi$.  This example is deliberately elementary:
it shows that transcendence is not foreign to the asymptotic length expressions defining epsilon
multiplicity.  Its limitation is equally important, however, because it is only a filtration-level
construction.  The second subsection supplies the mechanism that replaces such auxiliary
filtrations by powers of one homogeneous ideal.  In a divisorial section-ring setting, the saturation
filtration of a degree-$c$ generated ideal agrees with a divisorial filtration, and the resulting
epsilon multiplicity becomes a divisor-volume integral.

\subsection{A filtration-level model}\label{sec:filtration-example}\label{subsec:filtration-model}

We begin with a filtration whose epsilon multiplicity is $\pi$.  This is not yet the main theorem,
but it clarifies why transcendence appears naturally before one passes to the epsilon multiplicity
of a single ideal; compare \cite{CutSarkar}.

Let
$$
R=\CC[x,y]_{(x,y)},\qquad \m=(x,y),\qquad \alpha:=\pi.
$$
Set $J_0:=R$, and for $n\ge 1$ define the monomial ideal
$$
J_n:=\left(x^a y^b \mid a,b\in \NN,\ a+\frac{b}{\alpha}\ge n\right)\subset R.
$$

\begin{proposition}\label{prop:transcendental-filtration}
The family $\mathcal J=\{J_n\}_{n\ge 0}$ is a filtration of $\m$-primary ideals, satisfies the
$A(c)$ condition of Cutkosky--Sarkar, and
$$
2!\lim_{n\to\infty}\frac{\lambda_R(R/J_n)}{n^2}=\pi.
$$
In particular, the multiplicity of the filtration $\mathcal J$ is $\pi$.
\end{proposition}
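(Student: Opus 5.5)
The plan is to verify the filtration axioms directly from the defining inequality, then compute colengths by counting lattice points in a triangle.

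\emph{Filtration axioms.} Put $w(a,b):=a+b/\alpha$. This weight is additive on exponents, $w(a+a',b+b')=w(a,b)+w(a',b')$, so a product of a generator of $J_m$ and a generator of $J_n$ has weight at least $m+n$. Hence $J_mJ_n\subseteq J_{m+n}$. The inclusion $J_{n+1}\subseteq J_n$ is immediate, and $J_0=R$ by definition. Each $J_n$ with $n\ge1$ contains $x^n$ and $y^{\lceil n\alpha\rceil}$, so it is $\m$-primary. Since $J_n$ is a monomial ideal, a monomial lies in $J_n$ exactly when its exponent satisfies $w\ge n$. Therefore $R/J_n$ has $\CC$-basis the monomials $x^ay^b$ with $a,b\ge0$ and $a+b/\alpha<n$, and
$$
\lambda_R(R/J_n)=\#\{(a,b)\in\NN^2\mid a+b/\alpha<n\}.
$$

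\emph{$A(c)$ condition.} Recall that this asks for some $c$ with $\m^{cn}\subseteq J_n$ for all $n$. Take $c:=\lceil\alpha\rceil$. If $a+b\ge cn$, then $w(a,b)\ge (a+b)/\alpha\ge cn/\alpha\ge n$, because $\alpha\ge1$. So $\m^{cn}\subseteq J_n$. If the Cutkosky--Sarkar version of $A(c)$ is instead phrased via $(J_n:\m^\infty)\cap\m^{cn}\subseteq J_n$, it follows from the same inclusion. In the present case $H^0_\m(R/J_n)=R/J_n$ anyway, since each $J_n$ is $\m$-primary.

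\emph{Asymptotics.} The count above is the number of lattice points in the half-open triangle $T_n=n\,T_1$, where $T_1=\{(u,v)\in\RR_{\ge0}^2\mid u+v/\alpha<1\}$ has area $\alpha/2$. A standard Riemann-sum or Ehrhart-type estimate gives that the count equals $\operatorname{area}(T_n)+O(n)$, since the boundary has length $O(n)$. Concretely, I would sum over $a=0,\dots,n-1$ the quantity $\lceil\alpha(n-a)\rceil$, which lies within $1$ of $\alpha(n-a)$. This yields
$$
\lambda_R(R/J_n)=\frac{\alpha n^2}{2}+O(n),
$$
so $2!\lim_{n\to\infty}\lambda_R(R/J_n)/n^2=\alpha=\pi$. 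The limit exists, so the $\limsup$ in the definition of $\eps$ is a limit, and the multiplicity of the filtration $\mathcal J$ is $\pi$.

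\emph{Main obstacle.} The only delicate point is matching the precise form of the $A(c)$ condition in \cite{CutSarkar}; the counting itself is routine.
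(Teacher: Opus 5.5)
Your proposal is correct and follows the paper's proof step for step. The shared steps are additivity of the weight $a+b/\alpha$ for the filtration axioms, $x^n,y^{\lceil\alpha n\rceil}\in J_n$ for $\m$-primariness, $c=\lceil\alpha\rceil$ with $\m^{cn}\subseteq J_n$, and the count $\sum_{j=1}^{n}\lceil\alpha j\rceil=\frac{\alpha}{2}n^2+O(n)$. The paper states $A(c)$ as $(J_n:\m^\infty)\cap\m^{cn}=J_n\cap\m^{cn}$, which is the version your hedge covers, and your inclusion $\m^{cn}\subseteq J_n$ gives it at once because both sides equal $\m^{cn}$.
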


\begin{proof}
The defining inequalities immediately give $J_{n+1}\subseteq J_n$ for every $n\geq0$.  If
$x^a y^b\in J_m$ and $x^c y^d\in J_n$, then
$$
(a+c)+\frac{b+d}{\alpha}
=
\left(a+\frac{b}{\alpha}\right)+\left(c+\frac{d}{\alpha}\right)\ge m+n,
$$
so $J_mJ_n\subseteq J_{m+n}$.

Each $J_n$ is $\m$-primary because $x^n\in J_n$ and $y^{\lceil \alpha n\rceil}\in J_n$.
Choose $c=\lceil \alpha\rceil=4$.  If a monomial $x^a y^b\in \m^{cn}$, then $a+b\ge cn$, hence
$$
a+\frac{b}{\alpha}\ge \frac{a+b}{\alpha}\ge \frac{cn}{\alpha}\ge n.
$$
Thus $\m^{cn}\subseteq J_n$, and therefore
$$
(J_n:\m^\infty)\cap \m^{cn}=J_n\cap \m^{cn},
$$
which is exactly property $A(c)$ in the sense of \cite{CutSarkar}.

Finally, $\lambda_R(R/J_n)$ is the number of lattice points $(a,b)\in \NN^2$ satisfying
$$
a+\frac{b}{\alpha}<n.
$$
Thus
$$
\lambda_R(R/J_n)
=
\sum_{a=0}^{n-1}\#\{\,b\in \NN \mid b<\alpha(n-a)\,\}
=
\sum_{j=1}^{n}\lceil \alpha j\rceil
=
\frac{\alpha}{2}n^2+O(n).
$$
Hence
$$
\lim_{n\to\infty}\frac{\lambda_R(R/J_n)}{n^2}
=
\frac{\alpha}{2},
$$
and multiplying by $2!$ gives $\pi$.
\end{proof}

\begin{remark}
Proposition~\ref{prop:transcendental-filtration} shows that transcendental behavior is already
visible at the level of filtrations.  The main difficulty is to realize such arithmetic behavior from
the ordinary powers of a single ideal.  The next subsection gives the structural mechanism that makes
this possible.
\end{remark}

\subsection{A one-ideal volume formula}\label{subsec:one-ideal-volume-formula}

We now prove the integral formula that drives the paper. Our formula states that, under suitable positivity and regularity
hypotheses, the epsilon multiplicity of the localized ideal generated by $H^0(X,cL-D)$ in degree
$c$ becomes an integral of a divisor-volume function.  In other words, the local invariant, epsilon multiplicity,
is converted into an explicitly geometric quantity.

\begin{theorem}\label{thm:one-ideal}
Let $X$ be a normal projective variety of dimension $r$ over $\mathbb{C}$, let $L$ be ample with standard graded
section ring $R(X,L)$, and let $D$ be an effective Cartier divisor.  Fix an integer $c\ge 1$, and set
$$
\Mcal:=L^{\otimes c}\otimes \OO_X(-D).
$$
Equivalently, in the additive notation fixed above, $\Mcal=cL-D$.  As in Proposition~\ref{prop:generation-equality}, we view $H^0(X,\Mcal)=H^0(X,cL-D)$ inside the degree-$c$ piece
$$
R(X,L)_c=H^0(X,cL)
$$
via multiplication by the canonical section of $\OO_X(D)$, and set
$$
J:=R(X,L)\cdot H^0(X,\Mcal)\subseteq R(X,L).
$$
Thus $J$ is generated in degree $c$.  Assume:
\begin{enumerate}
\item $\Mcal$ is globally generated;
\item $\Mcal$ is normally generated;
\item for all $n\gg 0$, $\Mcal^{\otimes n}$ is $0$-regular with respect to $L$.
\end{enumerate}
Let
$$
\tau=\tau_L(D):=\inf\{\,t\ge 0\mid tL-D\ \text{is pseudo-effective}\,\}.
$$
Since $\Mcal=cL-D$ is globally generated, it is pseudo-effective, and therefore
$\tau\leq c$.  Then
$$
\eps(J R_{R_+})
=
(r+1)\int_{\tau}^{c}\vol_X(tL-D)\,dt.
$$
\end{theorem}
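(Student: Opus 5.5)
The plan is to compute the graded length $\lambda_S\bigl((J^n)^{\rm sat}/J^n\bigr)$ degree by degree, where $S=R(X,L)$, and then use the uniform asymptotic estimate from the preliminaries. First, $\dim S=r+1$, since $X\cong\Proj S$ has dimension $r$ and $S$ is a standard graded domain. So, by the graded reformulation of epsilon multiplicity recorded in the preliminaries,
$$
\eps(JS_{S_+})=(r+1)!\limsup_{n\to\infty}\frac{\lambda_S\bigl((J^n)^{\rm sat}/J^n\bigr)}{n^{r+1}}.
$$
By Proposition~\ref{prop:generation-equality}(1), which applies because of hypothesis (1), we have $(J^n)^{\rm sat}=I_{nD}$. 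The length of a graded module of finite length over $S$ with $S_0=k$ is its total $k$-dimension (here $k=\CC$). Hence
$$
\lambda_S\bigl((J^n)^{\rm sat}/J^n\bigr)=\sum_{m\geq 0}\Bigl(h^0(X,mL-nD)-\dim_k (J^n)_m\Bigr).
$$

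Next I split this sum into degree ranges.
\begin{itemize}
\item For $m<cn$, Proposition~\ref{prop:generation-equality}(2) gives $(J^n)_m=0$, so each such degree contributes $h^0(X,mL-nD)$.
\item For $m\geq cn$ and $n\gg0$, Proposition~\ref{prop:generation-equality}(3) applies by hypotheses (2) and (3), and gives $(J^n)_m=H^0(X,mL-nD)$. So these degrees contribute nothing.
\item For $m<\tau n$, the class $\frac mn L-D$ is not pseudo-effective, as noted in the preliminaries, so $h^0(X,mL-nD)=0$.
\end{itemize}
Therefore, for $n\gg0$,
$$
\lambda_S\bigl((J^n)^{\rm sat}/J^n\bigr)=\sum_{\tau n\leq m<cn}h^0(X,mL-nD).
$$
In passing, this shows that $(J^n)^{\rm sat}/J^n$ has finite length, since it vanishes in all large degrees.

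Finally I apply the uniform asymptotic computation from the subsection on volumes and thresholds, which yields
$$
\sum_{\tau n\le m<cn}h^0(X,mL-nD)=\frac{n^{r+1}}{r!}\cdot\frac1n\sum_{\tau n\le m<cn}\vol_X\Bigl(\frac mnL-D\Bigr)+o(n^{r+1}).
$$
The Riemann sum converges to $\int_\tau^c\vol_X(tL-D)\,dt$ because the volume is continuous on the compact segment $K$. Dividing by $n^{r+1}$ and multiplying by $(r+1)!$ gives
$$
\eps(JS_{S_+})=\frac{(r+1)!}{r!}\int_\tau^c\vol_X(tL-D)\,dt=(r+1)\int_\tau^c\vol_X(tL-D)\,dt.
$$
This also shows that the $\limsup$ is actually a limit.

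The main obstacle is the uniformity of the error term across the whole strip $\tau n\le m<cn$. The count is a double limit: $m$ ranges over about $(c-\tau)n$ values, and the errors must sum to $o(n^{r+1})$. This is exactly what the two-divisor uniform estimate of \cite[Proposition~3.5.1]{BGGJKM} provides, with the bound $\widehat\rho(n)$, together with the separate treatment of $m=0$ when $\tau=0$. I would cite that discussion directly rather than reprove it. A smaller point to check is the boundary term $m=\lceil\tau n\rceil$ when $\tau$ is irrational or $\tau L-D$ is only pseudo-effective. It contributes at most $O(n^r)$ and is absorbed into the error.
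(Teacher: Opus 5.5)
Your proposal is correct and follows the paper's proof essentially step for step. You identify $(J^n)^{\rm sat}=I_{nD}$ via Proposition~\ref{prop:generation-equality}, and use degree-$c$ generation, part (3), and the threshold $\tau$ to confine the defect to the strip $\tau n\le m<cn$, where $(J^n)_m=0$. You then apply the uniform estimate of \cite[Proposition~3.5.1]{BGGJKM}, with the term $m=0$ set aside when $\tau=0$, to turn $\sum h^0(X,mL-nD)$ into a Riemann sum for $\int_\tau^c\vol_X(tL-D)\,dt$.
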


\begin{proof}
By Proposition~\ref{prop:generation-equality},
$$
(J^n)^{\mathrm{sat}}=I_{nD}
$$
for every $n\ge 1$.  Since $I_{nD}/J^n$ is a finite-length module supported at $R_+$, its length
is unchanged after localizing at $R_+$.  Hence
$$
\eps(J R_{R_+})
=
(r+1)!\limsup_{n\to\infty}\frac{\lambda(I_{nD}/J^n)}{n^{r+1}}.
$$

Again by Proposition~\ref{prop:generation-equality}, the ideal $J$ is generated in degree $c$, so
$$
(J^n)_m=0\qquad \text{for }m<cn.
$$
Also, if $m<\tau n$, then $mL-nD$ is not effective by definition of $\tau$, and therefore
$$
(I_{nD})_m=H^0(X,mL-nD)=0.
$$

On the other hand, Proposition~\ref{prop:generation-equality} shows that for all $n\gg 0$, and all
$m\ge cn$ one has
$$
(J^n)_m=(I_{nD})_m.
$$

Thus
$$
(I_{nD}/J^n)_m=0
\qquad \text{unless}\qquad
\tau n\le m<cn \text{ and }n\gg 0.
$$

Thus the entire defect is concentrated in the strip $\tau n\le m<cn$.

Therefore, for $n\gg 0$,
$$
\lambda(I_{nD}/J^n)
=
\sum_{\tau n\le m<cn}\dim_\CC (I_{nD}/J^n)_m
=
\sum_{\tau n\le m<cn} h^0(X,mL-nD),
$$
because $(J^n)_m=0$ throughout this strip.

If $\tau=0$, set
$$
\delta_n:=h^0(X,-nD),
$$
and otherwise set $\delta_n:=0$.  Since $D$ is effective,
$\OO_X(-nD)\subseteq\OO_X$; as $X$ is integral and projective over
$\CC$, this gives $0\leq\delta_n\leq h^0(X,\OO_X)=1$.  Hence
$$
\lambda(I_{nD}/J^n)
=
\delta_n+
\sum_{\substack{\tau n\le m<cn\\m\geq1}}
h^0(X,mL-nD).
$$

Now set
$$
K:=\{\,tL-D\mid \tau\le t\le c\,\}\subset N^1(X)_\RR.
$$
Apply \cite[Proposition~3.5.1]{BGGJKM} to the two fixed divisors $L$
and $-D$, and let $\rho_0(N)=o(N^r)$ be the resulting error function in
the total coefficient.  For every $n\geq1$ and every $m$ with
$\tau n\leq m<cn$ and $m\geq1$, one then has
$$
\left|
h^0(X,mL-nD)-\frac{n^r}{r!}\vol_X\!\left(\frac{m}{n}L-D\right)
\right|
\leq \rho_0(m+n).
$$
Put
$$
e(N):=\frac{\rho_0(N)}{N^r}
\qquad\text{and}\qquad
\widehat\rho(n):=(c+1)^r n^r\sup_{N\geq n}e(N).
$$
Then $\widehat\rho(n)=o(n^r)$.  Since
$n\leq m+n\leq(c+1)n$ throughout the range, we also have
$\rho_0(m+n)\leq\widehat\rho(n)$ uniformly in $m$.  Hence
$$
\left|
\lambda(I_{nD}/J^n)
-
\frac{n^r}{r!}\sum_{\substack{\tau n\le m<cn\\m\geq1}}
\vol_X\!\left(\frac{m}{n}L-D\right)
\right|
\le
\delta_n+(cn+1)\widehat\rho(n)
=
o(n^{r+1}).
$$
Dividing by $n^{r+1}$, we obtain
$$
    \frac{\lambda(I_{nD}/J^n)}{n^{r+1}}
=
\frac1{r!}\cdot \frac1n
\sum_{\substack{\tau n\le m<cn\\m\geq1}}
\vol_X\!\left(\frac{m}{n}L-D\right)+o(1).
$$
Omitting the single index $m=0$ when $\tau=0$ does not change the limit.
Since the volume function is continuous on $N^1(X)_\RR$, the right-hand side
is a Riemann sum for
$$
\frac1{r!}\int_{\tau}^{c}\vol_X(tL-D)\,dt.
$$
Thus, taking the limit superior in the displayed equality above and multiplying by $(r+1)!$ gives the asserted formula.
\end{proof}

The hypotheses of Theorem~\ref{thm:one-ideal} are automatic after a
sufficiently positive common rescaling whenever $cL-D$ is ample.

\begin{theorem}\label{thm:realization}
Let $X$ be a normal projective variety of dimension $r$ over $\mathbb C$, let
$L$ be an ample line bundle, let $D$ be an effective Cartier divisor, and let
$c\geq1$ be an integer such that $cL-D$ is ample.  For every sufficiently
large integer $k$, set
$$
S_k=R(X,kL)
$$
and place the vector space $H^0(X,k(cL-D))$ in degree $c$ of $S_k$.  Define
$$
J_k=S_k\cdot H^0(X,k(cL-D)).
$$
Then $S_k$ is a normal standard graded domain and
$$
\eps\bigl(J_k(S_k)_{(S_k)_+}\bigr)
=
(r+1)k^r
\int_{\tau_L(D)}^c\vol_X(tL-D)\,dt.
$$
\end{theorem}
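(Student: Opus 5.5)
We apply Theorem~\ref{thm:one-ideal} to the triple $(X, L', D')$ with $L'=kL$, $D'=kD$, and the same integer $c$. Then $cL'-D'=k(cL-D)$, and the vector space $H^0(X,k(cL-D))$ placed in degree $c$ of $R(X,kL)$ is exactly the space $H^0(X,cL'-D')$ embedded via the canonical section of $\OO_X(D')$. Hence $J_k$ is the ideal $J$ of Theorem~\ref{thm:one-ideal} for these data, and the proof reduces to checking the hypotheses for $k\gg0$ and rescaling the integral.

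\textbf{Ring hypotheses.} For $k\gg0$, $kL$ is very ample and normally generated, as recalled after Theorem~\ref{thm:regularity-package}. Surjectivity of $\Sym^n H^0(X,kL)\to H^0(X,nkL)$ says that $S_k$ is standard graded. $S_k$ is a domain because $X$ is integral. For normality, I would use that $X$ is normal and the embedding by $|kL|$ is projectively normal for $k\gg0$. Then the section ring $R(X,kL)=\bigoplus_m H^0(X,mkL)$, which is the full section ring and so already equal to its $\Gamma_*$, is integrally closed: it is the ring of sections of the normal cone/affine cone over a normal variety. Thus $S_k$ is a normal standard graded domain.

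\textbf{Hypotheses (1)--(3) on $\Mcal_k:=k(cL-D)$.} Since $A:=cL-D$ is ample, for $k\gg0$ the bundle $kA$ is very ample (hence globally generated) and normally generated. This gives (1) and (2). For (3) we need, for all $n\gg0$,
$$H^i(X,nkA-ikL)=0 \quad \text{for } 1\le i\le r.$$
This is the step I expect to be the main obstacle, since vanishing is needed uniformly in $n$ with $k$ fixed first, and the choice of $k$ must not depend on $n$. My plan is to fix $k$ large enough for the other requirements. Then, for each of the finitely many $i\in\{1,\dots,r\}$, the sheaves $\OO_X(-ikL)$ are fixed coherent sheaves, and Serre vanishing for the ample bundle $kA$ gives $H^i(X,\OO_X(-ikL)\otimes (kA)^{\otimes n})=0$ for $n\ge n_0(k)$. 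Since only finitely many $i$ occur, a single $n_0$ works. Thus $\Mcal_k^{\otimes n}$ is $0$-regular with respect to $kL$ for $n\gg0$. Note that $kL$ is globally generated ample, as Theorem~\ref{thm:regularity-package} requires. The point to verify carefully is only that hypothesis (3) is asymptotic in $n$, so no uniformity in $k$ is needed.

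\textbf{Rescaling the integral.} Theorem~\ref{thm:one-ideal} gives
$$\eps\bigl(J_k(S_k)_{(S_k)_+}\bigr)=(r+1)\int_{\tau'}^{c}\vol_X(t\,kL-kD)\,dt, \qquad \tau'=\tau_{kL}(kD).$$
Since $tkL-kD=k(tL-D)$ is pseudo-effective iff $tL-D$ is, we have $\tau'=\tau_L(D)$. Homogeneity of the volume gives $\vol_X(k(tL-D))=k^r\vol_X(tL-D)$. Substituting yields the asserted formula.
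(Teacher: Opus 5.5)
Your proposal is correct and matches the paper's proof essentially step for step. You apply Theorem~\ref{thm:one-ideal} to $(X,kL,kD)$ with the same $c$. Standard grading, normality, and hypotheses (1)--(2) come from asymptotic very ampleness, normal generation and projective normality for $k\gg0$. Hypothesis (3) follows by fixing $k$ and applying Serre vanishing for the ample bundle $k(cL-D)$ to the finitely many fixed sheaves $(kL)^{-i}$. You then conclude using $\tau_{kL}(kD)=\tau_L(D)$ and degree-$r$ homogeneity of $\vol_X$, exactly as the paper does.
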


\begin{proof}
Put $\Mcal=cL-D$.  For $k\gg0$, the line bundle $kL$ is very ample and gives
a projectively normal embedding, while $k\Mcal$ is globally generated and
normally generated.  These follow from Serre vanishing and Mumford's
regularity theorem; see
\cite[Theorems~1.2.6 and~1.8.5]{Lazarsfeld}.  Since $X$ is
normal, projective normality implies that $S_k=R(X,kL)$ is a normal standard
graded domain.

For each $i=1,\ldots,r$, Serre vanishing applied to the ample line bundle
$k\Mcal$ and the fixed coherent sheaf $(kL)^{-i}$ gives
$$
H^i\bigl(X,(k\Mcal)^{\otimes n}\otimes(kL)^{-i}\bigr)=0
\qquad(n\gg0).
$$
Thus $(k\Mcal)^{\otimes n}$ is $0$-regular with respect to $kL$ for all
$n\gg0$.  Theorem~\ref{thm:one-ideal} applies to the triple
$(X,kL,kD)$ with the same integer $c$.

The pseudo-effective threshold is unchanged by the common scaling:
$$
\tau_{kL}(kD)=\tau_L(D).
$$
Since the volume on an $r$-dimensional variety is homogeneous of degree $r$,
$$
\vol_X\bigl(t(kL)-kD\bigr)
=
k^r\vol_X(tL-D).
$$
Substituting these identities into Theorem~\ref{thm:one-ideal} proves the
formula.
\end{proof}

\begin{corollary}\label{cor:transcendence-transport}
In the setting of Theorem~\ref{thm:realization}, if
$$
\int_{\tau_L(D)}^c\vol_X(tL-D)\,dt
$$
is transcendental, then, for every sufficiently large $k$, the normal standard
graded domain $S_k$ contains a homogeneous ideal $J_k$ whose localization at
$(S_k)_+$ has transcendental epsilon multiplicity.
\end{corollary}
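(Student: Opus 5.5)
The corollary follows almost directly from Theorem~\ref{thm:realization}, combined with the elementary fact that a nonzero rational multiple of a transcendental number is transcendental. The plan is as follows.

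First, fix $k$ large enough that Theorem~\ref{thm:realization} applies. That theorem gives two things: $S_k=R(X,kL)$ is a normal standard graded domain, and $J_k=S_k\cdot H^0(X,k(cL-D))$, with the generators placed in degree $c$, is a homogeneous ideal of $S_k$. For it we have
$$
\eps\bigl(J_k(S_k)_{(S_k)_+}\bigr)=(r+1)k^r\,I,
\qquad
I:=\int_{\tau_L(D)}^c\vol_X(tL-D)\,dt .
$$
Homogeneity of $J_k$ holds by construction, because it is generated by a subspace of the single graded piece $(S_k)_c$.

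Second, observe that $(r+1)k^r$ is a nonzero positive integer. Suppose $(r+1)k^r I$ were algebraic. Dividing by this nonzero rational number would then make $I$ algebraic, since $\overline{\QQ}$ is a field containing $\QQ$. This contradicts the hypothesis that $I$ is transcendental. Hence the epsilon multiplicity is transcendental, and this holds for every sufficiently large $k$.

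There is no real obstacle. The only point needing care is that the threshold "$k$ sufficiently large" in the corollary is exactly the one supplied by Theorem~\ref{thm:realization}. Within that theorem, this threshold covers very ampleness and projective normality of $kL$, global and normal generation of $k(cL-D)$, and the eventual $0$-regularity of $(k(cL-D))^{\otimes n}$, so no new positivity input is required here.
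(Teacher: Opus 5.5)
Your proof is correct and is essentially the paper's own (implicit) argument. Theorem~\ref{thm:realization} gives $\eps\bigl(J_k(S_k)_{(S_k)_+}\bigr)=(r+1)k^r\int_{\tau_L(D)}^c\vol_X(tL-D)\,dt$ for all large $k$, and multiplying a transcendental number by the nonzero rational $(r+1)k^r$ keeps it transcendental, exactly as you wrote.
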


\begin{remark}\label{rem:degree-convention}
The theorem reduces the epsilon multiplicity of the single ideal
$$
J=R(X,L)\cdot H^0(X,\Mcal)=R(X,L)\cdot H^0(X,cL-D)
$$
to a divisor-volume integral.  Here the generating space is always interpreted as lying in degree
$c$ of $R(X,L)$; this convention is essential, for example, when $cL-D\sim 0$.  From this point
onward, the problem is geometric and arithmetic.
\end{remark}

\section{A shifted projective-bundle construction}\label{sec:BN}

The purpose of this section is to construct geometric data to which Theorem~\ref{thm:one-ideal} applies while retaining the disk geometry suggested by the numerical framework of Borntr\"ager and Nickel.  We first recall the circular nef cone of a suitable abelian surface and then fix explicitly the triangle determined by three divisor classes.  We then shift these classes only in the ample direction.  This improves positivity without changing their transverse coordinates, and hence leaves the triangle unchanged.  The resulting projective bundle produces a moving disk whose intersection with this fixed triangle controls the divisor volume.  Finally, after passing to a sufficiently positive multiple, we verify the global generation, normal generation, and regularity hypotheses required by Theorem~\ref{thm:one-ideal}.

The numerical mechanism is concrete.  On the abelian surface, the nef condition is a disk inequality in the two transverse coordinates.  On the associated split projective bundle, the volume formula turns the simplex of symmetric powers into the fixed triangle
$$
T=\operatorname{conv}\{(-2,-2),(2,-2),(4,0)\}\subset \RR^2.
$$
The construction therefore separates into two tasks: preserve this triangle, and move the line bundles far enough into the ample cone for the one-ideal formula to apply.  The shift below accomplishes both.

\subsection{The circular nef cone on the abelian surface}\label{subsec:BN-surface}

We begin with the surface geometry that produces the moving disk.  The required input from \cite[Proposition~14]{BN} is a numerical basis in which the nef cone becomes circular and the volume function is given by a quadratic form.

Let $E$ be an elliptic curve over $\mathbb{C}$ without complex multiplication, and let $Y=E\times E$ be the abelian surface used in \cite[Proposition~14]{BN}.  Following the numerical normalization in the same proposition, we choose divisor classes
$A,D_1,D_2\in N^1(Y)$ whose images form an $\RR$-basis of $N^1(Y)_\RR$ and for which the intersection form is diagonal:
$$
A^2=N,
\qquad
D_1^2=D_2^2=-N,
\qquad
A\cdot D_i=D_1\cdot D_2=0,
$$
where
$$
N:=A^2>0.
$$
Thus $N$ is a fixed positive integer determined by the chosen polarization class $A$ on $Y$.  In these coordinates the nef cone is circular:
$$
uA+xD_1+yD_2\text{ is nef}
\quad\Longleftrightarrow\quad
u\ge \sqrt{x^2+y^2}.
$$
The strict inequality gives ampleness.  Thus
$$
uA+xD_1+yD_2\text{ is ample}
\quad\Longleftrightarrow\quad
u>\sqrt{x^2+y^2}.
$$
For a class $B=uA+xD_1+yD_2$, the above intersection form gives
$$
B^2=N(u^2-x^2-y^2).
$$
On an abelian variety, the pseudo-effective and nef cones coincide.  Indeed, every
effective divisor is nef: for a given irreducible curve, translate the divisor so that it
does not contain the curve and use translation invariance of numerical equivalence.
Conversely, ample classes are big and hence pseudo-effective; taking closures gives the
reverse inclusion.  See also \cite[Proposition~14]{BN} in the present numerical setting.
Consequently, a class outside the displayed circular cone is not pseudo-effective and has
volume zero.  Since nef
divisors have volume equal to their top self-intersection, the volume on $Y$ is therefore
$$
\vol_Y(uA+xD_1+yD_2)
=
\begin{cases}
N(u^2-x^2-y^2),& u\ge 0\text{ and }x^2+y^2\le u^2,\\[4pt]
0,& \text{otherwise.}
\end{cases}
$$
This formula is the source of the disks that appear later: the condition $x^2+y^2\le u^2$ cuts out a disk in the $(D_1,D_2)$-plane.

\subsection{A triangle inspired by Borntr\"ager--Nickel}\label{subsec:BN-triangle}

We next pass from the circular cone on $Y$ to the fixed planar region that controls the projective-bundle volume.  Motivated by the numerical projective-bundle setup of \cite[Proposition~14]{BN}, we fix the following three divisor classes:
$$
M_0=A-2D_1-2D_2,
\qquad
M_1=A+2D_1-2D_2,
\qquad
M_2=A+4D_1.
$$
The $A$-coefficients of the three divisors are all equal to $1$.  Their projections to the subspace
$$
\operatorname{span}_{\mathbb R}\{D_1,D_2\}
$$
are
$$
-2D_1-2D_2,
\qquad
2D_1-2D_2,
\qquad
4D_1.
$$
Equivalently, writing
$$
M_i=A+v_i
$$
with
$$
v_i\in \operatorname{span}_{\mathbb R}\{D_1,D_2\},
$$
the corresponding coefficient vectors are
$$
(-2,-2),
\qquad
(2,-2),
\qquad
(4,0).
$$
We denote their convex hull by
$$
T:=\operatorname{conv}\{(-2,-2),(2,-2),(4,0)\}\subset \RR^2.
$$
This triangle is the fixed transverse region in our construction: it is the domain over which the disk-controlled volume calculation will be carried out below.

We use the quotient convention for projective bundles:
$$
\mathbf P_Y(E):=\Proj_Y \Sym^\bullet E.
$$
Thus $\mathbf P_Y(E)$ parametrizes one-dimensional quotients of the fibers of $E$, the tautological line bundle satisfies
$$
\pi^*E\twoheadrightarrow \OO_{\mathbf P_Y(E)}(1),
$$
and
$$
\pi_*\OO_{\mathbf P_Y(E)}(m)=\Sym^m E
\qquad (m\geq 0).
$$

See \cite[Tags~01OB and 01OC]{Stacks} for more details.

Let
$$
X=\mathbf P\bigl(\OO_Y(M_0)\oplus \OO_Y(M_1)\oplus \OO_Y(M_2)\bigr),
\qquad
\xi=\OO_X(1).
$$
By this convention, if $\pi:X\to Y$ denotes the projection, then
$$
\pi_*\OO_X(m\xi)
=
\Sym^m\bigl(\OO_Y(M_0)\oplus \OO_Y(M_1)\oplus \OO_Y(M_2)\bigr)
=
\bigoplus_{n_0+n_1+n_2=m}\OO_Y(n_0M_0+n_1M_1+n_2M_2).
$$
Thus sections of multiples of $\xi$ decompose into summands indexed by triples
$$
(n_0,n_1,n_2)\in\ZZ_{\geq 0}^3,
\qquad
n_0+n_1+n_2=m.
$$
After rescaling by $m$, the normalized triples
$\lambda_i=n_i/m$ become points of the simplex
$$
\Delta=\{(\lambda_0,\lambda_1,\lambda_2)\in\RR_{\ge 0}^3\mid
\lambda_0+\lambda_1+\lambda_2=1\}.
$$
The affine map
$$
\Delta\longrightarrow \RR^2,
\qquad
\lambda\longmapsto
\lambda_0(-2,-2)+\lambda_1(2,-2)+\lambda_2(4,0)
$$
has image exactly $T$.  Consequently, the volume of a tautological divisor
on a split projective bundle with these transverse classes is governed by an
integral over $T$ of the circular volume function from the preceding
subsection.

The circular coordinates and the split projective-bundle strategy are
inspired by \cite{BN}.  The three classes displayed above are, from this point
onward, part of the present construction.  We do not invoke any geometric
description of their convex hull from \cite{BN}: every assertion about $T$
and every volume formula needed in the proof is derived directly in
Section~\ref{sec:arithmetic-input}.  In particular, the distance from the
origin to $T$ is $4/\sqrt{10}>1$.  Thus a radial parameter equal to $1$ does
not meet the triangle; increasing the common $A$-coefficient is not merely a
positivity convenience but is also necessary for the disk to reach the
transverse region that contributes to our integral.

For $xD_1+yD_2\in\operatorname{span}_{\RR}\{D_1,D_2\}$, write
$$
\|xD_1+yD_2\|:=\sqrt{x^2+y^2}.
$$
The radii of the three vertices of $T$ are
$$
\|(-2,-2)\|=\|(2,-2)\|=2\sqrt2,
\qquad
\|(4,0)\|=4.
$$
Equivalently,
$$
\|M_0-A\|=\|M_1-A\|=2\sqrt2,
\qquad
\|M_2-A\|=4.
$$
These numbers later reappear as critical radii for the moving disk.

\subsection{A positivity shift preserving the triangle}\label{subsec:why-shift}

The unshifted classes above produce the required disk geometry, but they do not by themselves provide the positivity needed for the one-ideal formula.  We therefore move all three classes in the common ample direction $A$.  This places the corresponding line bundles deeper in the ample cone while leaving their $D_1$ and $D_2$ coordinates unchanged.  The shift improves the algebraic positivity without altering the triangle that controls the volume computation.

Indeed, if we write $M_i=A+v_i$ with
$$
v_i\in \operatorname{span}_{\mathbb R}\{D_1,D_2\},
$$
then
$$
M_i+sA=(s+1)A+v_i.
$$
Thus the shift changes only the $A$-coordinate and leaves the projection to
$$
\operatorname{span}_{\mathbb R}\{D_1,D_2\}
$$
unchanged.  Hence the three coefficient vectors, and therefore the triangle $T$, remain the same.  The positivity improves, while the disk--triangle geometry remains unchanged.

\subsection{The shifted projective bundle and the moving disk}

We now carry out the shift and introduce the projective bundle on which the one-ideal construction will live.  The first point is that, once the common $A$-component is sufficiently large, the tautological line bundle becomes both ample and globally generated.

Fix an integer $s\geq 5$, and define
$$
\mathcal E_s
=
\OO_Y(M_0+sA)\oplus \OO_Y(M_1+sA)\oplus \OO_Y(M_2+sA),
\qquad
X_s:=\mathbf P(\mathcal E_s).
$$
Let
$$
M_s:=\OO_{X_s}(1).
$$

\begin{proposition}\label{prop:Ms-positive}
For $s\geq 5$, the line bundle $M_s$ is ample and globally generated.
\end{proposition}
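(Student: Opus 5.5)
The plan is to reduce both properties of $M_s=\OO_{\mathbf P(\mathcal E_s)}(1)$ to properties of the three line bundle summands of $\mathcal E_s$ on the abelian surface $Y$. For a quotient-convention projective bundle, $\OO_{\mathbf P(\mathcal E)}(1)$ is ample if and only if $\mathcal E$ is an ample vector bundle. It is globally generated whenever $\mathcal E$ is globally generated, because the surjection $H^0(Y,\mathcal E)\otimes\OO_Y\twoheadrightarrow\mathcal E$ pulls back and composes with $\pi^*\mathcal E\twoheadrightarrow\OO(1)$. A direct sum of line bundles is ample, respectively globally generated, exactly when each summand is. So it suffices to show that each of
$$
M_0+sA=(s+1)A-2D_1-2D_2,\qquad M_1+sA=(s+1)A+2D_1-2D_2,\qquad M_2+sA=(s+1)A+4D_1
$$
is ample and globally generated on $Y$ for $s\geq5$.

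Ampleness is read off from the circular criterion of Subsection~\ref{subsec:BN-surface}. The transverse radii are $2\sqrt2$, $2\sqrt2$ and $4$, and $s+1\geq 6>4$, so each class lies strictly inside the circular cone and is ample. It follows that $\mathcal E_s$ is ample, since it is a direct sum of ample line bundles, and hence $M_s$ is ample.

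For global generation, I will use the standard fact on abelian varieties that $\mathcal L^{\otimes 2}$ is globally generated for any ample $\mathcal L$ (Lefschetz/Mumford). Write each summand as $2B_i$ with
$$
B_i=\tfrac{s+1}{2}A+\tfrac12 v_i .
$$
The difficulty is that $B_i$ need not be an integral class. To handle this, I will instead write each summand as the sum of two ample line bundles, for instance $M_i+sA=(M_i+2A)+(s-2)A$, where $M_i+2A=3A+v_i$ is ample because $3>2\sqrt2$ only for the first two summands. For $M_2$ the split has to be different: use $M_2+4A=5A+4D_1$, which is ample, together with $(s-4)A$, and $s\geq5$ makes $(s-4)A$ ample. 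This split does not give global generation directly, however, because a sum of two ample bundles is not automatically globally generated.

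The cleaner route is the theorem that on an abelian variety an ample line bundle $\mathcal L$ is globally generated as soon as $\mathcal L\equiv \mathcal L_1+\mathcal L_2$ with both $\mathcal L_i$ ample. This holds because any line bundle algebraically equivalent to an ample $\mathcal L_2$ is a translate $t_x^*\mathcal L_2$, so one can apply the theorem of the square: $\mathcal L_1\otimes\mathcal L_2\cong t_x^*\mathcal L_1\otimes t_{-x}^*\mathcal L_2'$ with suitable translates having nonzero sections, which gives basepoint-freeness after varying $x$. Given this, it remains to exhibit for each $i$ a decomposition into two integral ample classes. With $s\geq5$ the choices are $\bigl(3A+v_i\bigr)+(s-2)A$ for $i=0,1$ and $\bigl(5A+4D_1\bigr)+(s-4)A$ for $i=2$, all ample by the circular criterion.

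The main obstacle is justifying this global generation step on $Y$ carefully, namely the sum-of-two-amples criterion or an equivalent statement. The rest of the argument consists of bookkeeping with the circular cone and pulling back under $\pi$.
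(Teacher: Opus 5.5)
Your proposal is correct and follows essentially the same route as the paper: reduce to the three summands $(s+1)A+v_i$, get ampleness from the circular criterion, write each summand as a sum of two ample classes and invoke the abelian-variety theorem that such a tensor product is globally generated (this is exactly Bauer--Szemberg \cite[Theorem~1.1(a)]{BS}, which the paper cites rather than reproves, so your ``main obstacle'' is a citable result), then compose $H^0(Y,\mathcal E_s)\otimes\OO_{X_s}\twoheadrightarrow\pi^*\mathcal E_s\twoheadrightarrow\OO_{X_s}(1)$. The only differences are the choice of split and one wording issue: the paper uses $A+(sA+v_i)$ uniformly for all $i$, while you use $(3A+v_i)+(s-2)A$ and $(5A+4D_1)+(s-4)A$, and your remark that a sum of two ample bundles ``is not automatically globally generated'' holds on general varieties but not on $Y$, where it is precisely the theorem you then use.
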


\begin{proof}
For each $i$, write
$$
M_i=A+v_i,
\qquad
v_i\in\{-2D_1-2D_2,\ 2D_1-2D_2,\ 4D_1\}.
$$
The norm of each $v_i$ with respect to the circular nef cone is at most $4$.
Thus
$$
M_i+sA=(s+1)A+v_i
$$
is ample for $s\geq 5$, because $s+1>4$.

Moreover,
$$
M_i+sA=A+\bigl(M_i+(s-1)A\bigr),
$$
and
$$
M_i+(s-1)A=sA+v_i
$$
is ample for $s\geq 5$, since $s>4\geq \|v_i\|$. Hence
$$
\OO_Y(M_i+sA)
\cong
\OO_Y(A)\otimes \OO_Y(M_i+(s-1)A)
$$
is the tensor product of two ample line bundles on the abelian variety $Y$.
By Bauer--Szemberg \cite[Theorem~1.1(a)]{BS}, this tensor product is globally
generated. Therefore each $\OO_Y(M_i+sA)$ is globally generated.

It follows that
$$
\mathcal E_s=\OO_Y(M_0+sA)\oplus \OO_Y(M_1+sA)\oplus \OO_Y(M_2+sA)
$$
is globally generated. Since each summand is ample, $\mathcal E_s$ is an ample
vector bundle by Hartshorne \cite[Proposition~2.2]{Hartshorne}.

Let
$$
\pi:X_s=\mathbf P(\mathcal E_s)\to Y.
$$
By the definition of global generation, there is a surjection
$$
H^0(Y,\mathcal E_s)\otimes \OO_Y \twoheadrightarrow \mathcal E_s;
$$
see \cite[Tag~01AL]{Stacks}. Pulling back along $\pi$, we obtain
a surjection
$$
H^0(Y,\mathcal E_s)\otimes \OO_{X_s}\twoheadrightarrow \pi^*\mathcal E_s.
$$
On the other hand, for a projective bundle the tautological invertible sheaf
comes with a canonical surjection
$$
\pi^*\mathcal E_s\twoheadrightarrow \OO_{X_s}(1),
$$
and in the Stacks Project this surjectivity is exactly
\cite[Tag~01O4]{Stacks}; see also the discussion after
\cite[Tag~01OB]{Stacks}. Composing the two surjections gives
$$
H^0(Y,\mathcal E_s)\otimes \OO_{X_s}\twoheadrightarrow \OO_{X_s}(1).
$$
Hence $\OO_{X_s}(1)=M_s$ is globally generated by
\cite[Tag~01AL]{Stacks}.

Finally, since $\mathcal E_s$ is ample, the tautological line bundle
$\OO_{X_s}(1)$ is ample on $\mathbf P(\mathcal E_s)$; see
\cite[Proposition~3.2]{Hartshorne}. Therefore $M_s$ is ample and globally
generated.
\end{proof}

We next choose an ample class $L_0$ and an effective divisor $D_0$ so that the endpoint of the ray $tL_0-D_0$ is tautological, while the classes with $t<q$ retain the same transverse coordinates.  This is the point at which the fixed triangle becomes a moving disk problem.

Fix an integer $\beta\geq 2$.  The line bundles $A$ and $(\beta-1)A$ are
ample, so Bauer--Szemberg \cite[Theorem~1.1(a)]{BS} implies that their tensor
product $\beta A$ is globally generated.  Choose an effective divisor
$$
A_\beta\in |\beta A|
$$
and set
$$
B:=\pi^*A_\beta,\qquad L_0:=M_s+B.
$$
Thus $B$ is an effective Cartier divisor and
$$
B\equiv\pi^*(\beta A)
$$
numerically.  Since $B$ is nef, the divisor $L_0=M_s+B$ is ample.  Moreover,
$M_s$ and $B$ are globally generated, so $L_0$ is globally generated.

For an integer $q>4$, set
$$
E_i(q):=(q-i)M_s-iB
\qquad (i=1,2,3,4).
$$
We choose $q$ sufficiently large so that all four divisors $E_i(q)$ are ample.
To see this, observe that
$$
\frac{1}{q}E_i(q)
=
\left(1-\frac{i}{q}\right)M_s-\frac{i}{q}B.
$$
For each fixed $i$, this class converges to $M_s$ in $N^1(X_s)_{\mathbb R}$ as
$q\to\infty$. Since $M_s$ is ample and the ample cone is open, the class
$q^{-1}E_i(q)$ is ample for all sufficiently large $q$. Hence $E_i(q)$ is ample
for all sufficiently large $q$. Since there are only finitely many indices
$i=1,2,3,4$, one may choose a single $q$ for which all four $E_i(q)$ are ample.

Define
$$
D_0:=qB,
\qquad
\tau_0:=\tau_{L_0}(D_0).
$$
Then
$$
qL_0-D_0=qM_s.
$$

\begin{remark}
The identity $qL_0-D_0=qM_s$ specifies the chosen positive endpoint $t=q$ of the
ray segment used in the construction.  At this endpoint the divisor is a positive
tautological divisor, which is later used to verify the positivity and regularity
hypotheses of Theorem~\ref{thm:one-ideal}.  The actual pseudo-effective boundary
occurs at $t=\tau_0<q$.  For $t<q$, one has
$$
tL_0-D_0\equiv tM_s-\pi^*(\beta(q-t)A).
$$
Thus, moving left from the endpoint $t=q$ subtracts a positive multiple of
the pullback of $A$ from the base direction.  In the projective bundle volume
formula this lowers the $A$-coefficient while leaving the transverse
$(D_1,D_2)$ coordinates unchanged.  Consequently the circular nef condition
on $Y$ becomes the moving-disk condition
$$
x^2+y^2\leq R(t)^2,
\qquad
R(t)=s+1+\beta-\frac{\beta q}{t}.
$$
Thus this choice simultaneously provides the positivity needed for the
one-ideal formula and preserves the disk geometry used in the arithmetic
calculation.
\end{remark}

\subsection{Verification of the one-ideal hypotheses}

The preceding construction supplies the required divisor volume geometry.  It remains to place it in the exact algebraic setting of Theorem~\ref{thm:one-ideal}.  Passing to a sufficiently positive common multiple gives a standard graded section ring, normal generation, and the regularity needed to identify the saturation defect with the divisor volume integral.

Choose an integer $k\gg 0$, and set
$$
L':=kL_0,\qquad D':=kD_0,\qquad \Mcal':=\OO_{X_s}(qL'-D')\cong \OO_{X_s}(kqM_s).
$$

\begin{proposition}\label{prop:geometric-hypotheses}
Assume that $q$ has been chosen sufficiently large so that the divisors
$$
E_i(q):=(q-i)M_s-iB
\qquad (i=1,2,3,4)
$$
are ample.  Then, for $k$ sufficiently large, the triple
$(X_s,L',D')$ satisfies all hypotheses of Theorem~\ref{thm:one-ideal}, where
$$
L':=kL_0,
\qquad
D':=kD_0,
\qquad
\Mcal':=\OO_{X_s}(qL'-D')\cong \OO_{X_s}(kqM_s),
$$
and the integer $c$ in Theorem~\ref{thm:one-ideal} is $c=q$.  Moreover,
$R(X_s,L')$ is a normal standard graded domain.
\end{proposition}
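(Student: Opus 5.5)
The plan is to check the three hypotheses of Theorem~\ref{thm:one-ideal} for $(X_s,L',D')$ with $c=q$ directly, and then the ring-theoretic claim. First, the basic setup: $X_s=\mathbf P(\mathcal E_s)$ is a $\mathbf P^2$-bundle over the smooth abelian surface $Y$, so it is smooth, hence normal, projective of dimension $r=4$. The divisor $D'=kqB=kq\,\pi^*A_\beta$ is an effective Cartier divisor. The bundle $L'=kL_0$ is ample because $L_0=M_s+B$ is ample. By the asymptotic facts recorded after the definition of normal generation (Lazarsfeld, Theorems~1.2.6 and~1.8.5), for $k\gg0$ the bundle $kL_0$ is very ample and defines a projectively normal embedding. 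Since $X_s$ is normal, $R(X_s,L')$ is then a normal standard graded domain, exactly as in the proof of Theorem~\ref{thm:realization}. The identity $qL'-D'=k(qL_0-D_0)=kqM_s$ gives $\Mcal'\cong\OO_{X_s}(kqM_s)$.

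Hypotheses (1) and (2) are routine. For (1), $M_s$ is globally generated by Proposition~\ref{prop:Ms-positive}, so every tensor power, in particular $\Mcal'=M_s^{\otimes kq}$, is globally generated. For (2), $M_s$ is ample, so $M_s^{\otimes m}$ is normally generated for all $m\gg0$ by the same asymptotic statement. Enlarging $k$ makes $\Mcal'=M_s^{\otimes kq}$ normally generated. Only finitely many lower bounds on $k$ arise, so one common $k$ works.

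The substantive step is hypothesis (3), and this is where the ampleness of the $E_i(q)$ enters. We must show that
\[
H^i\bigl(X_s,(\Mcal')^{\otimes n}\otimes (L')^{-i}\bigr)=0 \qquad (1\le i\le 4)
\]
for all $n\ge1$; in particular this includes $n=1$, i.e.\ $\Mcal'$ itself. Here only $i\le4=\dim X_s$ matters. The key computation is
\[
(\Mcal')^{\otimes n}\otimes(L')^{-i}
= k\bigl(nqM_s-iM_s-iB\bigr)
= k\,E_i(q)+k(n-1)q\,M_s .
\]
Thus the twist is $k$ times the ample class $E_i(q)$, plus the nef class $k(n-1)qM_s$.

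I would then invoke Fujita's vanishing theorem (Lazarsfeld, Theorem~1.4.35), applied for each $i$ with the ample divisor $E_i(q)$ and the coherent sheaf $\OO_{X_s}$. It gives an integer $m_i$ such that
\[
H^j\bigl(X_s,\OO_{X_s}(mE_i(q)+P)\bigr)=0
\]
for all $j>0$, all $m\ge m_i$ and all nef $P$. Taking $k\ge\max_i m_i$ and $P=k(n-1)qM_s$, which is nef because $M_s$ is, yields the required vanishing for every $n\ge1$ uniformly. Hence $(\Mcal')^{\otimes n}$ is $0$-regular with respect to $L'$ for all $n\ge1$, and a fortiori for $n\gg0$.

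The main obstacle is precisely this uniformity. Plain Serre vanishing would handle either a fixed $k$ with $n\to\infty$, or a fixed $n$ with $k\to\infty$, but the chosen $k$ must work for $n=1$ as well as for all larger $n$. The plan resolves this by splitting off the ample part $E_i(q)$, which is independent of $n$, from a nef remainder and applying Fujita vanishing. Collecting the finitely many lower bounds on $k$ from projective normality of $kL_0$, normal generation of $kqM_s$, and Fujita vanishing for $i=1,\dots,4$ completes the argument.
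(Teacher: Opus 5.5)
Your proof is correct, but the regularity step takes a different route from the paper's. The paper first fixes $k$, large enough for projective normality of $kL_0$ and for normal generation of $kqM_s$. It then observes that $kE_j(q)+(n-1)kqM_s$ is the fixed sheaf $\OO_{X_s}(kE_j(q))$ twisted by the $(n-1)$-st power of the ample bundle $\OO_{X_s}(kqM_s)$. Plain Serre vanishing then gives the required $H^j$-vanishing for $n\gg0$. This is exactly what hypothesis (3) of Theorem~\ref{thm:one-ideal} asks for, and it uses only the ampleness of $M_s$, not that of $E_j(q)$.

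You instead assert that the vanishing must hold for every $n\geq1$, including $n=1$. That misreads hypothesis (3), which is only required for $n\gg0$, so the ``uniformity obstacle'' you identify is self-imposed. Your Fujita-vanishing argument is nevertheless valid: take the ample divisor $E_i(q)$, the sheaf $\OO_{X_s}$, and the nef remainder $k(n-1)qM_s$. It proves the stronger statement that $(\Mcal')^{\otimes n}$ is $0$-regular with respect to $L'$ for all $n\geq1$.

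What this buys you: part (3) of Proposition~\ref{prop:generation-equality} then gives $(J^n)_m=H^0(X_s,mL'-nD')$ for every $n\geq1$ and $m\geq qn$, not just eventually. It also gives the ampleness hypothesis on $E_i(q)$ a genuine role. The price is invoking Fujita's theorem where Serre's suffices.

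Your treatment of global generation is also a slightly more direct alternative to the paper's appeal to the asymptotic results. You deduce it from Proposition~\ref{prop:Ms-positive}, since $\Mcal'$ is a tensor power of a globally generated bundle.
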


\begin{proof}
First note that $D'=kD_0$ is an effective Cartier divisor, since $D_0=qB$ and
$B$ was chosen to be an effective Cartier divisor.  Also $L'$ is ample because
$L_0$ is ample.

By asymptotic normal generation and projective normality for powers of ample
line bundles, after replacing $k$ by a larger integer if necessary, we may assume
that $L'=kL_0$ is very ample and defines a projectively normal embedding, while
$\Mcal'\cong \OO_{X_s}(kqM_s)$ is globally generated and normally generated;
see Lazarsfeld \cite[Theorems~1.2.6 and~1.8.5]{Lazarsfeld}.  In particular,
$R(X_s,L')$ is generated in degree one.  Since $X_s$ is smooth, hence normal,
and the embedding defined by $L'$ is projectively normal, its homogeneous
coordinate ring $R(X_s,L')$ is integrally closed.  Thus $R(X_s,L')$ is a normal
standard graded domain.

It remains to verify the required eventual regularity hypothesis, namely that
$(\Mcal')^{\otimes n}$ is $0$-regular with respect to $L'$ for all $n\gg 0$.
Since $\dim X_s=4$, this amounts to proving the vanishings
$$
H^j\bigl(X_s,(\Mcal')^{\otimes n}\otimes (L')^{-j}\bigr)=0
\qquad
(j=1,2,3,4)
$$
for all $n\gg 0$.  In additive notation, for $j=1,2,3,4$ one has
$$
n(qL'-D')-jL'
=
k\big((nq-j)M_s-jB\big)
=
kE_j(q)+(n-1)kqM_s.
$$
Here $E_j(q)$ is ample by the choice of $q$, and $M_s$ is ample by
Proposition~\ref{prop:Ms-positive}.  Thus, for each fixed $j$, the line bundle
$$
\OO_{X_s}(kE_j(q))\otimes \OO_{X_s}((n-1)kqM_s)
$$
is a fixed coherent sheaf twisted by sufficiently large powers of the ample
line bundle $\OO_{X_s}(kqM_s)$ as $n\to\infty$.  By Serre vanishing,
$$
H^j\bigl(X_s,(\Mcal')^{\otimes n}\otimes (L')^{-j}\bigr)=0
\qquad
(j=1,2,3,4)
$$
for all $n\gg 0$.  Hence $(\Mcal')^{\otimes n}$ is $0$-regular with respect to $L'$
for all $n\gg 0$.

Thus all hypotheses of Theorem~\ref{thm:one-ideal} are satisfied.
\end{proof}

This is the triple to which Theorem~\ref{thm:one-ideal} will be applied in Section~\ref{sec:main}.

\section{Arithmetic evaluation of the volume integral}\label{sec:arithmetic-input}

In this section we evaluate the divisor volume integral arising from the
shifted construction.  We first derive the moving disk formula directly from
the projective bundle volume formula and the circular volume function on the
abelian surface.  For the arithmetic step, however, we do not divide the disk
integral into chambers.  Instead, we reverse the order of integration.  The
radial integral can then be evaluated explicitly, and polar coordinates reduce
the remaining triangle integral to three one-variable integrals of rational
functions after a tangent half angle substitution.  Their universal arctangent
terms cancel exactly, while the logarithmic terms form an explicit algebraic
linear combination whose value is positive.  Baker's theorem then gives
transcendence.

The geometry is controlled by the fixed triangle
$$
T=\operatorname{conv}\{(-2,-2),(2,-2),(4,0)\}.
$$
The critical radii occur when the moving disk is tangent to an edge of $T$ or
passes through a vertex.  They are
$$
\frac{4}{\sqrt{10}},\qquad 2,
\qquad 2\sqrt2,
\qquad 4.
$$
These radii describe the possible changes in the disk geometry; the smallest
one, $4/\sqrt{10}$, identifies the pseudo-effective threshold.  The
noncancellation argument below incorporates all chambers at once, so no
endpoint angle is isolated before the full integral is assembled.

\begin{lemma}\label{lem:weighted-pb-volume}
Let $Y$ be an integral projective variety of dimension $v$, let
$$
E=\bigoplus_{i=0}^r \OO_Y(A_i)
$$
for Cartier divisors $A_0,\ldots,A_r$ on $Y$, and set
$$
X:=\mathbf P_Y(E),
\qquad
\xi:=\OO_X(1),
\qquad
\pi:X\to Y.
$$
We use the quotient convention for projective bundles, so that
$$
\pi_*\OO_X(m\xi)=\Sym^m E
\qquad
(m\geq 0).
$$
Let $d=v+r$.  For every integer $a\geq 1$ and every Cartier divisor $B$ on $Y$, one has
$$
\vol_X(a\xi+\pi^*B)
=
\frac{d!}{v!}
\int_{\Delta_a}
\vol_Y\left(B+\sum_{i=0}^r\mu_iA_i\right)
\,d\mu_1\cdots d\mu_r,
$$
where
$$
\Delta_a
:=
\left\{
(\mu_0,\ldots,\mu_r)\in\RR_{\geq 0}^{r+1}
\ \middle|\ 
\sum_{i=0}^r\mu_i=a
\right\}.
$$
Here the integral is taken in the coordinates $\mu_1,\ldots,\mu_r$, with
$\mu_0=a-\sum_{i=1}^r\mu_i$.
\end{lemma}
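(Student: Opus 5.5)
We compute $h^0$ directly via the projection formula and then pass to the limit with a Riemann-sum argument. For $m\geq 1$,
$$
H^0\bigl(X,m(a\xi+\pi^*B)\bigr)=H^0\bigl(Y,\Sym^{ma}E\otimes\OO_Y(mB)\bigr)
=\bigoplus_{n_0+\cdots+n_r=ma}H^0\Bigl(Y,mB+\sum_i n_iA_i\Bigr).
$$
This uses $\pi_*\OO_X(ma\xi)=\Sym^{ma}E$ and the projection formula for the line bundle $\pi^*\OO_Y(mB)$. The splitting of $E$ makes the symmetric power a direct sum of line bundles indexed by lattice points of $ma\Delta_1$. Writing $\mu_i=n_i/m\in\Delta_a\cap\frac1m\ZZ^{r+1}$, each summand is $H^0\bigl(Y,m(B+\sum\mu_iA_i)\bigr)$.

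The next step is a uniform asymptotic estimate for these summands. Apply \cite[Proposition~3.5.1]{BGGJKM}, used exactly as in the proof of Theorem~\ref{thm:one-ideal}, to the fixed finite family of divisors $B,A_0,\dots,A_r$ on $Y$. This gives
$$
\Bigl|h^0\Bigl(Y,mB+\sum n_iA_i\Bigr)-\frac{m^v}{v!}\vol_Y\Bigl(B+\sum\mu_iA_i\Bigr)\Bigr|\leq\rho_0\Bigl(m+\sum n_i\Bigr)=\rho_0\bigl(m(1+a)\bigr),
$$
with $\rho_0(N)=o(N^v)$. The bound is uniform because the total coefficient $m(1+a)$ is fixed for given $m$. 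If that proposition needs nonnegative coefficients, I would replace $B$ by $\pm B$ as appropriate, or reduce to $B$ plus a multiple of an ample class. The positive-coefficient technicality is handled the same way it was in the earlier section.

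Now sum over the $\binom{ma+r}{r}=O(m^r)$ lattice points. The total error is $O(m^r)\,o(m^v)=o(m^{d})$, so
$$
h^0\bigl(X,m(a\xi+\pi^*B)\bigr)=\frac{m^v}{v!}\sum_{\mu\in\Delta_a\cap\frac1m\ZZ^{r+1}}\vol_Y\Bigl(B+\sum\mu_iA_i\Bigr)+o(m^d).
$$
The sum is $m^r$ times a Riemann sum on the simplex $\Delta_a$, parametrized by $(\mu_1,\dots,\mu_r)$ with mesh $1/m$. Since $\vol_Y$ is continuous on $N^1(Y)_\RR$, and hence bounded and uniformly continuous on the compact image of $\Delta_a$, this Riemann sum converges to the stated integral. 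Boundary lattice points number $O(m^{r-1})$ and do not affect the limit.

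Multiplying by $d!/m^d$ then gives that the limsup defining $\vol_X$ is actually a limit and equals $\frac{d!}{v!}\int_{\Delta_a}\vol_Y(\cdots)\,d\mu$. I expect the uniform error estimate to be the main obstacle: the family of divisors $mB+\sum n_iA_i$ is infinite, so a pointwise limit for each $\mu$ is not enough. The earlier citation of \cite{BGGJKM} is exactly the tool for this, so I would apply it in the same way as in Theorem~\ref{thm:one-ideal}.
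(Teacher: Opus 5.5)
Your proposal is correct and follows the paper's proof essentially step for step: the projection formula, the splitting of $\Sym^{ma}E$ into line bundles indexed by lattice points of $\Delta_a$, the uniform estimate of \cite[Proposition~3.5.1]{BGGJKM} with error $\rho_0(m(1+a))=o(m^v)$, and a Riemann-sum limit using continuity of $\vol_Y$. The one point to tighten is the positive-coefficient technicality. It does not come from the sign of $B$, whose coefficient is already $m\geq 1$; it comes from lattice points on faces of $\Delta_a$ where some $n_i=0$. The paper handles these by applying the same estimate, on each of the finitely many faces, to the subcollection of $A_i$ with nonzero coefficients. Alternatively, you can simply discard those $O(m^{r-1})$ boundary terms once you note that $h^0(Y,mB+\sum n_iA_i)=O(m^v)$ uniformly.
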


\begin{proof}
For every integer $k\geq 1$, the projective-bundle formula and the projection formula give
$$
H^0\left(X,k(a\xi+\pi^*B)\right)
=
H^0\left(Y,\Sym^{ka}(E)\otimes \OO_Y(kB)\right).
$$
Since $E$ is a direct sum of line bundles, its symmetric power decomposes as
$$
\Sym^{ka}(E)
=
\bigoplus_{n_0+\cdots+n_r=ka}
\OO_Y\left(\sum_{i=0}^r n_iA_i\right).
$$
Therefore
$$
h^0\left(X,k(a\xi+\pi^*B)\right)
=
\sum_{n_0+\cdots+n_r=ka}
h^0\left(Y,kB+\sum_{i=0}^r n_iA_i\right).
$$
Write
$$
\mu_i:=\frac{n_i}{k}.
$$
Then $\mu_i\in \frac1k\ZZ_{\geq 0}$ and $\sum_i\mu_i=a$, so the preceding expression becomes
$$
h^0\left(X,k(a\xi+\pi^*B)\right)
=
\sum_{\mu\in \Delta_a\cap (\frac1k\ZZ)^{r+1}}
h^0\left(Y,k\left(B+\sum_{i=0}^r\mu_iA_i\right)\right).
$$

Dividing by $k^d/d!$, where $d=v+r$, gives
$$
\frac{h^0\left(X,k(a\xi+\pi^*B)\right)}{k^d/d!}
=
\frac{d!}{v!}
\cdot
\frac1{k^r}
\sum_{\mu\in \Delta_a\cap (\frac1k\ZZ)^{r+1}}
\frac{
h^0\left(Y,k\left(B+\sum_{i=0}^r\mu_iA_i\right)\right)
}{
k^v/v!
}.
$$

It remains to pass from this lattice sum to the corresponding integral.  Set
$$
K
:=
\left\{
B+\sum_{i=0}^r\mu_iA_i
\ \middle|\ 
\mu\in\Delta_a
\right\}
\subset N^1(Y)_\RR.
$$
This is a compact subset of $N^1(Y)_\RR$.  The uniform estimate cited below is
stated for positive coefficients.  If a lattice point lies on a boundary face
of $\Delta_a$, we omit the zero coordinates and apply the same estimate to the
corresponding subcollection of the divisors $A_i$.  Since $\Delta_a$ has only
finitely many faces, the maximum of the resulting error functions is still
$o(k^v)$.  Moreover, the total coefficient in every summand is
$k+\sum_i n_i=k(1+a)$, a fixed multiple of $k$.  Thus, by the uniform
asymptotic estimate for sections on compact sets of numerical classes, see
\cite[Proposition~3.5.1]{BGGJKM}, there is a function
$\rho_0(N)=o(N^v)$ such that
$$
\left|
h^0(Y,k\Lambda)
-
\frac{k^v}{v!}\vol_Y(\Lambda)
\right|
\leq \rho_0(k(1+a))
$$
uniformly for all rational classes $\Lambda\in K$ that occur from lattice points
$$
\mu\in\Delta_a\cap\left(\frac1k\ZZ\right)^{r+1}.
$$
Since $a$ is fixed,
$$
\frac{\rho_0(k(1+a))}{k^v}
=
(1+a)^v
\frac{\rho_0(k(1+a))}{(k(1+a))^v}
\longrightarrow 0.
$$
Thus, after dividing by $k^v/v!$, the error tends to $0$ uniformly.  Hence
$$
\frac1{k^r}
\sum_{\mu\in \Delta_a\cap (\frac1k\ZZ)^{r+1}}
\frac{
h^0\left(Y,k\left(B+\sum_i\mu_iA_i\right)\right)
}{
k^v/v!
}
=
\frac1{k^r}
\sum_{\mu\in \Delta_a\cap (\frac1k\ZZ)^{r+1}}
\vol_Y\left(B+\sum_i\mu_iA_i\right)
+o(1).
$$

The volume function is continuous on $N^1(Y)_\RR$, and hence its restriction to the compact set
$K$ is continuous and bounded.  Therefore the last lattice sum is the usual Riemann sum for
$$
\int_{\Delta_a}
\vol_Y\left(B+\sum_{i=0}^r\mu_iA_i\right)
\,d\mu_1\cdots d\mu_r.
$$
Taking the limit as $k\to\infty$ gives
$$
\vol_X(a\xi+\pi^*B)
=
\frac{d!}{v!}
\int_{\Delta_a}
\vol_Y\left(B+\sum_{i=0}^r\mu_iA_i\right)
\,d\mu_1\cdots d\mu_r.
$$
This proves the formula.
\end{proof}

\begin{proposition}\label{prop:disk-reduction}
With the notation of Section~\ref{sec:BN}, set
$$
\Theta_t:=tL_0-D_0=tM_s+(t-q)B
\qquad
(\tau_0\leq t\leq q).
$$
Numerically,
$$
\Theta_t\equiv tM_s+\pi^*(\beta(t-q)A).
$$
Then
$$
\vol_{X_s}(\Theta_t)
=
\frac{3N}{2}t^4\Phi(R(t)),
$$
where
$$
R(t):=s+1+\beta-\frac{\beta q}{t},
$$
and
$$
\Phi(R):=
\begin{cases}
\displaystyle
\iint_{T\cap B_R}(R^2-x^2-y^2)\,dx\,dy, & R\geq 0,\\[8pt]
0, & R<0.
\end{cases}
$$
Here $B_R\subset \RR^2$ is the closed disk of radius $R$ centered at the origin, and
$$
T=\operatorname{conv}\{(-2,-2),(2,-2),(4,0)\}.
$$
\end{proposition}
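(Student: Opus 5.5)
The plan is to apply Lemma~\ref{lem:weighted-pb-volume} with $Y$ the abelian surface ($v=2$), $r=2$ and $d=4$. The three summands are $A_i:=M_i+sA=(s+1)A+v_i$, with $v_0=-2D_1-2D_2$, $v_1=2D_1-2D_2$, $v_2=4D_1$.

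\emph{Reduction to integral coefficients.} Volume is a numerical invariant and $B\equiv\pi^*(\beta A)$, so we may replace $\Theta_t$ by the class $t\xi+\pi^*(\beta(t-q)A)$, where $\xi=M_s=\OO_{X_s}(1)$. The lemma needs an integer coefficient $a\geq1$ of $\xi$, so we first treat rational $t=a/b>0$ with $a,b$ positive integers. By homogeneity of degree $4$,
$$\vol_{X_s}(\Theta_t)=b^{-4}\,\vol_{X_s}\bigl(a\xi+\pi^*(\beta(a-qb)A)\bigr),$$
and the lemma applies to the right-hand side with the Cartier divisor $\beta(a-qb)A$ on $Y$. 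Both sides of the asserted identity are continuous in $t$: the left by continuity of $\vol$ on $N^1(X_s)_\RR$, the right because $R$ and $\Phi$ are continuous, with $\Phi(0)=0$. Hence the formula extends from rational $t$ to all $t\in[\tau_0,q]$; we need $t>0$ here, which holds since $\tau_0>0$.

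\emph{The lemma gives an integral over $\Delta_t$.} Directly in terms of $t$, the lemma yields
$$\vol_{X_s}(\Theta_t)=\frac{4!}{2!}\int_{\Delta_t}\vol_Y\Bigl(\beta(t-q)A+\sum_i\mu_iA_i\Bigr)\,d\mu_1\,d\mu_2.$$
Since $\sum_i\mu_i=t$, the class inside is $\bigl(\beta(t-q)+(s+1)t\bigr)A+\sum_i\mu_iv_i$. Substitute $\mu=t\lambda$ with $\lambda\in\Delta$, so that $d\mu_1\,d\mu_2=t^2\,d\lambda_1\,d\lambda_2$. By homogeneity of $\vol_Y$ (degree $2$) the integrand becomes
$$t^2\,\vol_Y\Bigl(R(t)A+\sum_i\lambda_iv_i\Bigr),\qquad R(t)=s+1+\beta-\frac{\beta q}{t}.$$
Now use the circular volume formula from Section~\ref{subsec:BN-surface}. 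Writing $w=(x,y)=\sum_i\lambda_i(\text{coefficient vector of }v_i)\in T$, this volume equals $N(R^2-x^2-y^2)$ when $R\geq0$ and $x^2+y^2\leq R^2$, and $0$ otherwise. In particular the whole integrand vanishes when $R<0$, matching $\Phi(R)=0$.

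\emph{Change of variables to the triangle.} Change variables from $(\lambda_1,\lambda_2)$ to $(x,y)$ via the affine map
$$(x,y)=(-2,-2)+\lambda_1(4,0)+\lambda_2(6,2),$$
which maps $\Delta$ bijectively onto $T$. Its Jacobian determinant is $4\cdot2-0\cdot6=8$, so $d\lambda_1\,d\lambda_2=\tfrac18\,dx\,dy$. Collecting constants, $12\cdot\tfrac{N}{8}\cdot t^2\cdot t^2=\tfrac{3N}{2}t^4$, and the integrand restricted to $T\cap B_R$ gives $\Phi(R(t))$.

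The main obstacle is the bookkeeping rather than any real difficulty. One must keep the two rescalings straight: the $t^2$ from the measure and the $t^2$ from homogeneity of $\vol_Y$. One must also check that the affine map's Jacobian is computed in the same coordinates $(\mu_1,\mu_2)$ as in the lemma. Finally, the rational-to-real passage needs the integrality hypothesis $a\geq1$ in the lemma, which is why we clear denominators before applying it.
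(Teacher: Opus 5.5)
Your proposal is correct and follows the paper's proof essentially step for step. Both arguments clear denominators for rational $t$, apply Lemma~\ref{lem:weighted-pb-volume} with factor $4!/2!=12$, rescale to the standard simplex to pick up $t^2\cdot t^2$, use the circular volume formula on $Y$, map $\Delta$ onto $T$ with Jacobian $8$, and extend to real $t$ by continuity.

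The only points you assert without proof are $\tau_0>0$ and the continuity of $\Phi$. The paper justifies the first by noting that $D_0\cdot H^3>0$ for any ample $H$, so $-D_0$ is not pseudo-effective. It justifies the second by dominated convergence applied to $\Phi(R)=\iint_T(R^2-x^2-y^2)_+\,dx\,dy$.
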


\begin{proof}
Since $D_0=qB$ is a nonzero effective divisor whose numerical class is a positive multiple of the pullback of the ample class $A$, the class $-D_0$ is not pseudo-effective.  Indeed, if $H$ is any ample divisor on $X_s$, then $D_0\cdot H^3>0$, whereas every pseudo-effective divisor has nonnegative intersection with $H^3$.  Hence $\tau_0>0$, so $R(t)$ is defined throughout $[\tau_0,q]$.

Since volume depends only on numerical equivalence, we may compute with the class
$$
\Theta_t\equiv tM_s+\pi^*(\beta(t-q)A).
$$
We first assume that $t\in[\tau_0,q]$ is rational.  Choose $b\geq1$ such that $a:=bt$ is an integer.  Then
$$
b\Theta_t=aM_s+(a-bq)B
\equiv
aM_s+\pi^*(\beta(a-bq)A).
$$
Applying Lemma~\ref{lem:weighted-pb-volume} to
$$
X_s=\mathbf P_Y\left(\bigoplus_{i=0}^2\OO_Y(M_i+sA)\right)
$$
gives the following formula.  Here $\dim Y=2$ and $\dim X_s=4$, so the coefficient in the projective bundle formula is $4!/2!=12$:
$$
\vol_{X_s}(b\Theta_t)
=
12\int_{\Delta_a}
\vol_Y\left(\beta(a-bq)A+\sum_{i=0}^2\mu_i(M_i+sA)\right)
\,d\mu_1d\mu_2.
$$
Set $\mu_i=a\lambda_i=bt\lambda_i$.  Then $\lambda=(\lambda_0,\lambda_1,\lambda_2)$ lies in the standard simplex $\Delta$ and
$$
d\mu_1d\mu_2=b^2t^2d\lambda_1d\lambda_2.
$$
Moreover,
$$
\beta(a-bq)A+\sum_{i=0}^2\mu_i(M_i+sA)
=
b\left(\beta(t-q)A+t\sum_{i=0}^2\lambda_i(M_i+sA)\right).
$$
Since $Y$ is a surface, its volume function is homogeneous of degree $2$.  Therefore
$$
\vol_{X_s}(b\Theta_t)
=
12b^4t^2\int_{\Delta}
\vol_Y\left(\beta(t-q)A+t\sum_{i=0}^2\lambda_i(M_i+sA)\right)
\,d\lambda_1d\lambda_2.
$$
On the other hand, $\dim X_s=4$, so
$$
\vol_{X_s}(b\Theta_t)=b^4\vol_{X_s}(\Theta_t).
$$
Canceling $b^4$ gives
$$
\vol_{X_s}(\Theta_t)
=
12t^2\int_{\Delta}
\vol_Y\left(\beta(t-q)A+t\sum_{i=0}^2\lambda_i(M_i+sA)\right)
\,d\lambda_1d\lambda_2.
$$

Write
$$
M_i=A+v_i,
$$
where
$$
v_0=-2D_1-2D_2,
\qquad
v_1=2D_1-2D_2,
\qquad
v_2=4D_1.
$$
Then
$$
\beta(t-q)A+t\sum_{i=0}^2\lambda_i(M_i+sA)
=
tR(t)A+t\sum_{i=0}^2\lambda_i v_i.
$$
The affine map
$$
\Delta\longrightarrow T,
\qquad
\lambda\longmapsto
\lambda_0(-2,-2)+\lambda_1(2,-2)+\lambda_2(4,0),
$$
is described, in the coordinates $\lambda_0=1-\lambda_1-\lambda_2$, by
$$
x=-2+4\lambda_1+6\lambda_2,
\qquad
y=-2+2\lambda_2.
$$
Its Jacobian is therefore
$$
\left|
\det
\begin{pmatrix}
4&6\\
0&2
\end{pmatrix}
\right|
=8.
$$
Thus, if
$$
(x,y)=\lambda_0(-2,-2)+\lambda_1(2,-2)+\lambda_2(4,0),
$$
then $\sum_i\lambda_i v_i=xD_1+yD_2$ and
$$
\vol_{X_s}(\Theta_t)
=
\frac{3}{2}t^2
\iint_T
\vol_Y\left(tR(t)A+t(xD_1+yD_2)\right)
\,dx\,dy.
$$

Since $t>0$, the circular description of the nef cone in Subsection~\ref{subsec:BN-surface} gives
$$
\vol_Y\left(tR(t)A+t(xD_1+yD_2)\right)
=
\begin{cases}
Nt^2\bigl(R(t)^2-x^2-y^2\bigr),
& R(t)\geq0\text{ and }x^2+y^2\leq R(t)^2,\\[4pt]
0,&\text{otherwise.}
\end{cases}
$$
Substitution gives
$$
\vol_{X_s}(\Theta_t)
=
\frac{3N}{2}t^4
\iint_{T\cap B_{R(t)}}
\bigl(R(t)^2-x^2-y^2\bigr)
\,dx\,dy
=
\frac{3N}{2}t^4\Phi(R(t)).
$$

It remains to remove the rationality assumption on $t$.  The left-hand side is continuous because volume is continuous on $N^1(X_s)_\RR$.  The function $R(t)$ is continuous on $[\tau_0,q]$, and for $R\geq0$ one has
$$
\Phi(R)=\iint_T(R^2-x^2-y^2)_+\,dx\,dy.
$$
On every bounded interval of radii, these integrands are uniformly bounded on the fixed compact triangle $T$ and converge pointwise as $R$ varies.  Dominated convergence therefore shows that $\Phi$ is continuous on $[0,\infty)$.  Moreover, the distance from the origin to $T$ is $4/\sqrt{10}$, so $T\cap B_R=\varnothing$ for $0\leq R<4/\sqrt{10}$.  Hence $\Phi(R)=0$ on this interval, and the definition $\Phi(R)=0$ for $R<0$ makes $\Phi$ continuous on all of $\RR$.  The right-hand side is therefore continuous in $t$, and density of the rational numbers completes the proof.
\end{proof}

\Needspace{22\baselineskip}
\begin{corollary}\label{cor:critical-radii}
Let
$$
T=\operatorname{conv}\{(-2,-2),(2,-2),(4,0)\}.
$$
The distinct radii at which the combinatorial type of $T\cap B_R$ can change are
$$
\frac{4}{\sqrt{10}},
\qquad
2,
\qquad
2\sqrt2,
\qquad
4.
$$
Moreover, $R(t)$ is strictly increasing on $(0,\infty)$.  For every $\rho<s+1+\beta$, the number
$$
t_\rho:=\frac{\beta q}{s+1+\beta-\rho}
$$
is positive and satisfies $R(t_\rho)=\rho$.
\end{corollary}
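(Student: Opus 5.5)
The plan has two parts: an elementary planar analysis of how the disk $B_R$ meets the fixed triangle $T$, and a one-line calculus check for $R(t)$. I would start with the triangle. Its three edges are the segment from $(-2,-2)$ to $(2,-2)$, lying on the line $y=-2$; the segment from $(2,-2)$ to $(4,0)$, lying on the line $x-y=4$; and the segment from $(4,0)$ to $(-2,-2)$, lying on the line $x-3y=4$.

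For a closed disk centered at the origin, the combinatorial type of $T\cap B_R$ is determined by which vertices of $T$ lie inside $B_R$ and which edge lines the circle $\partial B_R$ crosses within the corresponding edge segments. It can therefore change only at three kinds of radii: the radius of a vertex, the distance from the origin to an edge line (provided the foot of the perpendicular lies on the segment), and the distance from the origin to $T$ itself, where the intersection first becomes nonempty.

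The vertex radii were computed in Subsection~\ref{subsec:BN-triangle}; they are $2\sqrt2$, $2\sqrt2$ and $4$. For the edge lines I would compute the feet of the perpendiculars directly.
\begin{itemize}
\item On $y=-2$ the foot is $(0,-2)$. It lies on the edge, and its distance is $2$.
\item On $x-y=4$ the foot is $(2,-2)$, a vertex. Its distance is $2\sqrt2$, so it gives nothing new.
\item On $x-3y=4$ the foot is $(4/10,-12/10)$. This lies on the segment, since the parameter between $(-2,-2)$ and $(4,0)$ is $2/5$. Its distance is $4/\sqrt{10}$.
\end{itemize}
The origin lies outside $T$: the constraint $x-3y\le 4$ holds there, but $y\ge -2$ and the third edge condition place the origin on the wrong side of $x-3y=4$. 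Hence the nearest point of $T$ to the origin is this last foot, at distance $4/\sqrt{10}$. This agrees with the value already used in Proposition~\ref{prop:disk-reduction}.

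Collecting the distinct values gives $4/\sqrt{10}<2<2\sqrt2<4$. To complete this part I would verify that between consecutive radii no vertex or tangency event occurs, so the intersection pattern is locally constant there. I would also note briefly that each listed radius is a genuine transition point: a vertex enters the disk, or an edge becomes tangent to the circle. The only real care needed, and the step I would double-check, is deciding which side of each edge the origin lies on and confirming that the perpendicular feet fall inside their segments.

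For the second statement, differentiate: $R'(t)=\beta q/t^2>0$ for $t>0$, since $\beta\ge 2$ and $q>4$. Hence $R(t)$ is strictly increasing on $(0,\infty)$.

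For $\rho<s+1+\beta$, the denominator $s+1+\beta-\rho$ is positive, so $t_\rho>0$. Substituting gives
$$
R(t_\rho)=s+1+\beta-\frac{\beta q}{t_\rho}=s+1+\beta-(s+1+\beta-\rho)=\rho.
$$
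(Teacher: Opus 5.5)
Your proposal is correct and follows the paper's proof essentially step for step: the nearest points $(0,-2)$, $(2,-2)$, $(2/5,-6/5)$ on the three edge segments, together with the vertex norms, give the four critical radii, and $R'(t)=\beta q/t^2>0$ plus direct substitution handles $t_\rho$. One slip to fix: $T$ lies in the half-plane $x-3y\geq 4$ (the vertex $(2,-2)$ gives $8$), so the origin, where $x-3y=0$, is on the wrong side of that edge line; your sentence about which constraint holds at the origin garbles this, although your conclusion that the nearest point of $T$ is $(2/5,-6/5)$ is right.
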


\begin{proof}
The three edges of $T$ lie on
$$
y=-2,
\qquad
x-y=4,
\qquad
x-3y=4.
$$
The closest points on these edge segments to the origin are, respectively,
$$
(0,-2),
\qquad
(2,-2),
\qquad
\left(\frac25,-\frac65\right).
$$
Their distances from the origin are therefore
$$
2,
\qquad
2\sqrt2,
\qquad
\frac{4}{\sqrt{10}}.
$$
The vertex norms are
$$
\|(-2,-2)\|=\|(2,-2)\|=2\sqrt2,
\qquad
\|(4,0)\|=4.
$$
The combinatorial type of $T\cap B_R$ can change only when $\partial B_R$ is tangent to an edge or passes through a vertex, which gives the stated list.  Finally,
$$
R'(t)=\frac{\beta q}{t^2}>0
\qquad
(t>0),
$$
and solving $R(t)=\rho$ gives the formula for $t_\rho$.
\end{proof}

\begin{figure}[t]
\centering
\begin{tikzpicture}[scale=0.95]

  \def\rA{1.264911}
  \def\rB{2.000000}
  \def\rC{2.828427}
  \def\rD{4.000000}

  \draw[->,thick] (-3.4,0) -- (5.5,0) node[right] {$x$};
  \draw[->,thick] (0,-3.3) -- (0,3.5) node[above] {$y$};

  \draw[densely dashed,gray!65,thin] (0,0) circle[radius=\rA];
  \draw[densely dashed,gray!65,thin] (0,0) circle[radius=\rB];
  \draw[densely dashed,gray!65,thin] (0,0) circle[radius=\rC];
  \draw[densely dashed,gray!65,thin] (0,0) circle[radius=\rD];

  \filldraw[fill=gray!15,draw=black,line width=0.9pt]
    (-2,-2) -- (2,-2) -- (4,0) -- cycle;

  \filldraw[fill=black,draw=black] (-2,-2) circle (2pt);
  \node[below left=1pt] at (-2,-2) {$(-2,-2)$};

  \filldraw[fill=black,draw=black] (2,-2) circle (2pt);
  \node[below right=1pt] at (2,-2) {$(2,-2)$};

  \filldraw[fill=white,draw=white] (4,0) circle (4.2pt);
  \filldraw[fill=black,draw=black] (4,0) circle (2.6pt);
  \node[above right=6pt] at (4,0) {$(4,0)$};

  \filldraw[fill=black,draw=black] (0,0) circle (1.6pt);
  \node[above left=1pt] at (0,0) {$0$};

  \node[text=black,fill=white,inner sep=1.2pt] at (0,-2.42) {$y=-2$};
  \node[text=black,fill=white,inner sep=1.2pt,rotate=45] at (3.00,-1.00) {$x-y=4$};
  \node[text=black,fill=white,inner sep=1.2pt,rotate=18.5] at (0.95,-0.52) {$x-3y=4$};

  \draw[black!65] (132:\rA) -- ++(-0.22,0.34);
  \node[text=black,fill=white,inner sep=1pt] at (-0.92,1.56) {$\frac{4}{\sqrt{10}}$};

  \draw[black!65] (114:\rB) -- ++(-0.18,0.42);
  \node[text=black,fill=white,inner sep=1pt] at (-1.70,2.26) {$2$};

  \draw[black!65] (148:\rC) -- ++(-0.52,0.20);
  \node[text=black,fill=white,inner sep=1pt] at (-2.92,1.92) {$2\sqrt{2}$};

  \draw[black!65] (0:\rD) -- ++(0.48,0.24);
  \node[text=black,fill=white,inner sep=1pt] at (4.55,0.42) {$4$};

\end{tikzpicture}
\caption{The triangle $T=\conv\{(-2,-2),(2,-2),(4,0)\}$ and the four critical radii
$\frac{4}{\sqrt{10}},\,2,\,2\sqrt{2},\,4$.}
\end{figure}

Before proving the arithmetic proposition, we isolate the logarithmic
noncancellation.  The key step is to integrate first in the disk radius.  This
keeps all angular chambers together and makes the cancellation of the
arctangent terms explicit.

\begin{lemma}\label{lem:polar-noncancellation}
Put
$$
a=s+1+\beta,
\qquad
d_0=\frac4{\sqrt{10}},
\qquad
J=\int_{d_0}^{s+1}\frac{\Phi(R)}{(a-R)^6}\,dR.
$$
Then
$$
J=A+\Lambda,
\qquad
A\in\overline{\QQ},
$$
where $\Lambda$ is a finite $\overline{\QQ}$-linear combination of
logarithms of positive algebraic numbers. Moreover,
$$
\Lambda>0.
$$
In particular, $J$ is transcendental.
\end{lemma}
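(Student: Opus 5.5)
The plan is to interchange the order of integration so that the radial integral is done first, and then reduce everything to one-variable rational integrals along the three edges of $T$. For $R\geq0$ we have $\Phi(R)=\iint_T(R^2-x^2-y^2)_+\,dx\,dy$. Since $s\geq5$, every point of $T$ has norm $\rho=\sqrt{x^2+y^2}\in[d_0,4]\subset[d_0,s+1]$ (Corollary~\ref{cor:critical-radii}). Fubini then gives
$$
J=\iint_T G(\rho)\,dx\,dy,\qquad G(\rho)=\int_{\rho}^{s+1}\frac{R^2-\rho^2}{(a-R)^6}\,dR .
$$
Substituting $u=a-R$ gives $R^2-\rho^2=(a^2-\rho^2)-2au+u^2$. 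The inner integral involves only $u^{-6},u^{-5},u^{-4}$, so it produces no logarithms. Evaluating between $u=\beta$ and $u=a-\rho$ shows that $G$ is a polynomial in $\rho^2$ with rational coefficients (depending on $a,\beta$) plus a rational function of $\rho$ whose only pole is at $\rho=a>4$. In particular $G$ is smooth and positive on $T$.

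Next pass to polar coordinates $(\rho,\theta)$. The origin lies outside $T$, and the three edge lines $y=-2$, $x-y=4$, $x-3y=4$ have polar equations $\rho=h_j/\cos(\theta-\varphi_j)$, with $h_j\in\{2,2\sqrt2,4/\sqrt{10}\}$ and algebraic $\cos\varphi_j,\sin\varphi_j$. Writing $\iint_T G\,dA$ as a signed sum over the three edges (Green's theorem, or a star decomposition from the origin), we get
$$
J=\sum_{j}\epsilon_j\int_{\theta_j^-}^{\theta_j^+}H\bigl(h_j\sec(\theta-\varphi_j)\bigr)\,d\theta,\qquad H'(\rho)=\rho\,G(\rho),\ \epsilon_j=\pm1.
$$
The endpoint angles are the angles of the vertices $(-2,-2),(2,-2),(4,0)$. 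Here $H$ is a rational function of $\rho$, possibly plus a multiple of $\log(a-\rho)$; I will track whether the $(a-\rho)^{-1}$ coefficient in $\rho G$ vanishes. The tangent half-angle substitution $w=\tan((\theta-\varphi_j)/2)$ has algebraic endpoints, since the vertex coordinates are integers and the square roots involved are algebraic. It turns each edge integral into $\int R_j(w)\,dw$ with $R_j\in\overline{\QQ}(w)$, and $R_j$ has poles where $a\cos(\theta-\varphi_j)=h_j$ and at $w=\pm i$. Partial fractions give algebraic terms, logarithms $\log|w-w_0|$ at real algebraic points, and arctangent terms coming from the conjugate pair $w=\pm i$ and any complex roots. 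Any $\log(a-\rho)$ piece of $H$ also contributes logarithms of algebraic numbers after integration by parts.

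The first key check is that the arctangent terms cancel. The pole at $w=\pm i$ corresponds to the $\theta$-polynomial part. After summing, it contributes the angle $\theta$ integrated around the closed boundary of $T$ against a polynomial in $\rho^2$. This equals $\iint_T$ of a polynomial in $x,y$, which is algebraic: the would-be multiples of $\pi$ or $\arctan$ combine into the total angle subtended, and this cancels between the entering and exiting edges. I would verify this by computing the residue coefficient at $w=\pm i$ for each edge and checking that the signed sum over $j$ telescopes across shared vertices. If the poles from $a\cos=h_j$ turn out to be real (they are, since $a>h_j$), they contribute only real logarithms of algebraic numbers $\frac{a\pm\sqrt{a^2-h_j^2}}{\cdots}$. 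This yields $J=A+\Lambda$ with $A\in\overline{\QQ}$ and $\Lambda$ an explicit $\overline{\QQ}$-combination of logarithms of positive algebraic numbers.

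The main obstacle is proving $\Lambda>0$, because $A$ and $\Lambda$ are defined only by the formal splitting. My first attempt is to write $\Lambda=\sum_j\epsilon_j c_j(a,h_j)\,\ell_j$ explicitly. Each $\ell_j$ has the form $\log\frac{(a+\sqrt{a^2-h_j^2})\pm\cdots}{\cdots}$ evaluated between vertex parameters, and I would group the terms edge by edge as $\int$ of a positive kernel $\frac{1}{a\cos(\theta-\varphi_j)-h_j}$-type expression over an angular interval, so that each group is visibly a positive multiple of a monotone difference of logarithms. If that grouping fails, the fallback is to regard $\Lambda$ as a function of the real parameter $a\geq8$ (with $\beta$ entering algebraically). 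I would then bound it from below by an elementary inequality, using its asymptotic expansion in $1/a$ together with an explicit error bound, and check positivity directly in the leading term. Once $\Lambda\neq0$ is established, Corollary~\ref{cor:Baker-linear-form} gives the transcendence of $J$.
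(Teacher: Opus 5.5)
Your reduction is the paper's own: integrate in $R$ first, pass to polar coordinates edge by edge, substitute $z=\tan(\phi/2)$, and split into partial fractions. Your telescoping check for the arctangent part is also sound. The poles at $z=\pm i$ only see the value $H(0)$ of the radial primitive, since $\sec\phi=0$ there, and this value is the same for every edge. So the arctangent terms total $H(0)\oint_{\partial T}d\theta=0$, because the origin lies outside $T$; in the paper this is the identity $\pi/4+\pi/8-3\pi/8=0$.

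The genuine gap is the last step, which is the real content of the lemma: you never establish $\Lambda\neq0$, and your first strategy cannot do it on its own. The edge $x-3y=4$ is the near boundary of $T$ seen from the origin, so its contribution enters with the opposite sign. Showing that each edge's logarithmic part is a positive kernel integral therefore only exhibits $\Lambda$ as a difference of two positive quantities. What is missing is the explicit logarithmic coefficient of each edge. This is the residue of the $z$-integrand at its real pole $z=\kappa_h=\sqrt{(a-h)/(a+h)}$, and it equals $\rho_h=\frac{3h^5}{2a(a^2-h^2)^{5/2}}$. Set $u=\sqrt2-1$, $p=\frac{\sqrt5-1}{2}$, $v=\frac{\sqrt{10}-1}{3}$ and $\lambda_{h,z}=\log\frac{\kappa_h+z}{\kappa_h-z}$. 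Then
\[
30\Lambda=2\rho_2\lambda_{2,u}+\rho_{2\sqrt2}\lambda_{2\sqrt2,u}-\rho_{d_0}\bigl(\lambda_{d_0,p}+\lambda_{d_0,v}\bigr).
\]
The $h^5$ scaling is what overcomes the sign:
\begin{itemize}
\item $\rho_{d_0}/\rho_2<(2/\sqrt{10})^5<\frac18$;
\item each near-edge logarithm is below $3$, using $\kappa_{d_0}>\frac56$ for $a\geq8$;
\item $\lambda_{2,u}>\log(1+\sqrt2)>\frac12$.
\end{itemize}
Hence $30\Lambda>\frac14\rho_2>0$. Your fallback via an expansion in $1/a$ is not an argument until these coefficients are computed. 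Since $a$ may be as small as $8$ while $h$ reaches $2\sqrt2$, a uniform error bound would need essentially the same explicit estimates.

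There is also a secondary error. You assert that a $\log(a-\rho)$ term in $H$ would still produce logarithms of algebraic numbers after integration by parts. In general it would not. The integral $\int\log(a-h\sec\phi)\,d\phi$ leads to $\int\phi\cdot(\text{rational function of }\cos\phi,\sin\phi)\,d\phi$, and hence to Clausen/dilogarithm values, which Baker's theorem does not cover. So the reduction to a linear form in logarithms genuinely depends on that coefficient vanishing, and you should prove this rather than leave it open. It does vanish. One has $G(\rho)=\alpha+\gamma\rho^2-\frac{a-4\rho}{30(a-\rho)^4}$ with $\alpha,\gamma\in\QQ$. Writing $\sigma=a-\rho$,
\[
\frac{\rho(a-4\rho)}{(a-\rho)^4}=-3a^2\sigma^{-4}+7a\sigma^{-3}-4\sigma^{-2},
\]
with no $\sigma^{-1}$ term. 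Hence the radial primitive is a polynomial in $\rho^2$ plus a rational function (the paper's $P_a$). Even powers of $\sec\phi$ integrate to algebraic quantities, so only the real poles $\pm\kappa_h$ contribute logarithms.
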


\begin{proof}
Set
$$
U=s+1=a-\beta
$$
and write
$$
r=\sqrt{x^2+y^2}
$$
for $(x,y)\in T$. The distance from the origin to $T$ is $d_0$, and every
point of $T$ has radius at most $4$, whereas $U\geq 6$. Since the integrand is
nonnegative, Tonelli's theorem gives
$$
J
=
\iint_T\int_r^U
\frac{R^2-r^2}{(a-R)^6}\,dR\,dx\,dy.
$$
An antiderivative of the inner integrand is
$$
F_{a,r}(R)
=
\frac{a^2-r^2}{5(a-R)^5}
-
\frac{a}{2(a-R)^4}
+
\frac1{3(a-R)^3}.
$$
At the lower endpoint,
$$
F_{a,r}(r)=\frac{a-4r}{30(a-r)^4}.
$$
At the upper endpoint $R=U$, the quantity $F_{a,r}(U)$ is a rational linear
combination of $1$ and $r^2$. Since $T$ is a rational triangle, the integral
of this upper endpoint term over $T$ is rational. Consequently,
$$
J=A_0-\frac1{30}K,
\qquad
A_0\in\QQ,
$$
where
$$
K:=\iint_T\frac{a-4r}{(a-r)^4}\,dx\,dy.
$$

Set
$$
P_a(r)
=
-\frac{a^2}{(a-r)^3}
+\frac{7a}{2(a-r)^2}
-\frac4{a-r}.
$$
A direct differentiation gives
$$
P_a'(r)=\frac{r(a-4r)}{(a-r)^4}.
$$
The three sides of $T$ lie on
$$
y=-2,
\qquad
x-y=4,
\qquad
x-3y=4.
$$
Solving these equations along a ray $(r\cos\theta,r\sin\theta)$ shows that,
up to boundary sets of measure zero, $T$ is the union of the two polar
regions
$$
\Omega_1:
-\frac{3\pi}{4}\leq\theta\leq-\frac\pi4,
\qquad
L_0(\theta)\leq r\leq L_1(\theta),
$$
and
$$
\Omega_2:
-\frac\pi4\leq\theta\leq0,
\qquad
L_0(\theta)\leq r\leq L_2(\theta),
$$
where
$$
L_0(\theta)=\frac4{\cos\theta-3\sin\theta},
\qquad
L_1(\theta)=\frac{-2}{\sin\theta},
\qquad
L_2(\theta)=\frac4{\cos\theta-\sin\theta}.
$$
Therefore
\begin{align*}
K
&=
\iint_{\Omega_1}\frac{a-4r}{(a-r)^4}\,r\,dr\,d\theta
+
\iint_{\Omega_2}\frac{a-4r}{(a-r)^4}\,r\,dr\,d\theta\\
&=
\int_{-3\pi/4}^{-\pi/4}
\bigl(P_a(L_1(\theta))-P_a(L_0(\theta))\bigr)\,d\theta\\
&\quad+
\int_{-\pi/4}^{0}
\bigl(P_a(L_2(\theta))-P_a(L_0(\theta))\bigr)\,d\theta\\
&=
\int_{-3\pi/4}^{-\pi/4}P_a(L_1(\theta))\,d\theta
+
\int_{-\pi/4}^{0}P_a(L_2(\theta))\,d\theta
-
\int_{-3\pi/4}^{0}P_a(L_0(\theta))\,d\theta.
\end{align*}

We now compute the nonalgebraic parts of these three boundary integrals. The
boundary functions can be written as
$$
L_0(\theta)=d_0\sec(\theta-\theta_0),
\qquad
\theta_0=-\arctan(3),
$$
$$
L_1(\theta)=2\sec\left(\theta+\frac\pi2\right),
\qquad
L_2(\theta)=2\sqrt2\sec\left(\theta+\frac\pi4\right).
$$
For $h>0$, use the tangent half angle substitution
$$
z=\tan\left(\frac\phi2\right),
\qquad
\sec\phi=\frac{1+z^2}{1-z^2},
\qquad
d\phi=\frac{2\,dz}{1+z^2}.
$$
A direct calculation gives
\begin{align*}
&P_a(h\sec\phi)\,d\phi\\
&\qquad=
\left(
-\frac3{a(1+z^2)}
+
\frac{
h^2(1+z^2)\bigl(a(z^2-1)+3h(z^2+1)\bigr)
}{
a\bigl((a+h)z^2-(a-h)\bigr)^3
}
\right)dz.
\end{align*}
Put
$$
\kappa_h=\sqrt{\frac{a-h}{a+h}},
\qquad
\rho_h=\frac{3h^5}{2a(a^2-h^2)^{5/2}}.
$$
Denote the second rational function by $Q_{a,h}(z)$.  Its only poles are
the order-three poles $z=\pm\kappa_h$.  The coefficients of the simple-pole
terms are obtained without choosing any branch of a primitive:
$$
\operatorname{Res}_{z=\kappa_h}Q_{a,h}
=
\frac12
\left.
\frac{d^2}{dz^2}
\bigl((z-\kappa_h)^3Q_{a,h}(z)\bigr)
\right|_{z=\kappa_h}
=
\rho_h,
$$
and, by evenness,
$$
\operatorname{Res}_{z=-\kappa_h}Q_{a,h}=-\rho_h.
$$
Thus
$$
Q_{a,h}(z)
=
\frac{\rho_h}{z-\kappa_h}
-
\frac{\rho_h}{z+\kappa_h}
+
Q^{\rm rat}_{a,h}(z),
$$
where $Q^{\rm rat}_{a,h}$ has only poles of order two or three.  Its primitive
is therefore rational.  At algebraic endpoints it contributes only algebraic
numbers.  The complete list of nonalgebraic primitives is consequently
$$
-\frac3a\arctan z
+
\rho_h\log\left(\frac{\kappa_h-z}{\kappa_h+z}\right).
$$
This identity isolates every logarithmic and arctangent contribution used
below; no omitted part of the partial fraction decomposition can contribute
a further nonalgebraic term.

Set
$$
u=\sqrt2-1,
\qquad
p=\frac{\sqrt5-1}{2},
\qquad
v=\frac{\sqrt{10}-1}{3}.
$$
The corresponding $z$ intervals for $L_1,L_2,L_0$ are, respectively,
$$
[-u,u],
\qquad
[0,u],
\qquad
[-p,v].
$$
Since $a=s+1+\beta\geq8$, one has
$$
\kappa_2\geq\sqrt{\frac35}>\frac12>u,
$$
$$
\kappa_{2\sqrt2}^2
\geq
\frac{8-2\sqrt2}{8+2\sqrt2}
>
\frac14,
\qquad
\text{hence }\kappa_{2\sqrt2}>\frac12>u,
$$
and, since $d_0<4/3$,
$$
\kappa_{d_0}>\sqrt{\frac57}>\frac34>p,v.
$$
In particular, none of the three integration intervals meets a pole.

The arctangent contributions can now be read off without any further
partial fractions:
$$
\begin{array}{c|c|c|c}
\text{boundary} & h & z\text{-interval} &
\text{signed increment of }\arctan z\\ \hline
L_1 & 2 & [-u,u] & 2\arctan(u)=\pi/4\\
L_2 & 2\sqrt2 & [0,u] & \arctan(u)=\pi/8\\
-L_0 & d_0 & [-p,v] &
-\bigl(\arctan(p)+\arctan(v)\bigr)=-3\pi/8
\end{array}
$$
Here $u=\sqrt2-1=\tan(\pi/8)$, so
$$
\arctan(u)=\frac\pi8.
$$
Moreover, $0<p,v<1$ and a direct calculation gives
$$
\frac{p+v}{1-pv}=1+\sqrt2=\tan\left(\frac{3\pi}{8}\right).
$$
The tangent addition formula, together with
$0<\arctan(p)+\arctan(v)<\pi/2$, therefore yields
$$
\arctan(p)+\arctan(v)=\frac{3\pi}{8}.
$$
Therefore the total contribution of the universal arctangent term to $K$ is
$$
-\frac3a
\left(
\frac\pi4+\frac\pi8-\frac{3\pi}{8}
\right)
=0.
$$
The table also makes clear that the cancellation occurs only after all three
boundary integrals are combined.

For $0<z<\kappa_h$, define
$$
\lambda_{h,z}
=
\log\left(\frac{\kappa_h+z}{\kappa_h-z}\right).
$$
The logarithmic primitive associated to the two residue terms is
$$
\rho_h\log\left(\frac{\kappa_h-z}{\kappa_h+z}\right).
$$
Evaluating on the three intervals, taking account of the minus sign in front
of the $L_0$ integral, and absorbing all algebraic endpoint contributions
into a single number $A\in\overline{\QQ}$, we obtain
$$
J=A+\Lambda,
$$
where
$$
\Lambda
=
\frac1{30}
\left(
2\rho_2\lambda_{2,u}
+
\rho_{2\sqrt2}\lambda_{2\sqrt2,u}
-
\rho_{d_0}\bigl(\lambda_{d_0,p}+\lambda_{d_0,v}\bigr)
\right).
$$
The preceding inequalities show that every logarithm is the real logarithm
of a positive algebraic number.

It remains to prove that $\Lambda>0$. Since $\kappa_2<1$ and the function
$$
x\longmapsto\log\left(\frac{x+u}{x-u}\right)
$$
is decreasing for $x>u$, one has
$$
\lambda_{2,u}
>
\log\left(\frac{1+u}{1-u}\right)
=
\log(1+\sqrt2)
>
\frac12.
$$
For the last inequality, one may use
$$
\log(1+\sqrt2)
=
\int_1^{1+\sqrt2}\frac{dt}{t}
>
\frac{\sqrt2}{1+\sqrt2}
=2-\sqrt2
>
\frac12.
$$
Therefore
$$
2\rho_2\lambda_{2,u}>\rho_2.
$$
On the other hand,
$$
\frac{\rho_{d_0}}{\rho_2}
=
\left(\frac{d_0}{2}\right)^5
\left(\frac{a^2-4}{a^2-d_0^2}\right)^{5/2}
<
\left(\frac2{\sqrt{10}}\right)^5
<
\frac18.
$$
The inequalities
$$
p<\frac23,
\qquad
v<\frac34,
\qquad
\kappa_{d_0}>\sqrt{\frac57}>\frac56
$$
give
$$
\frac{p}{\kappa_{d_0}}<\frac45,
\qquad
\frac{v}{\kappa_{d_0}}<\frac9{10}.
$$
Consequently,
$$
\lambda_{d_0,p}<\log9<3,
\qquad
\lambda_{d_0,v}<\log19<3.
$$
It follows that
$$
\rho_{d_0}\bigl(\lambda_{d_0,p}+\lambda_{d_0,v}\bigr)
<
6\rho_{d_0}
<
\frac34\rho_2.
$$
Since $\rho_{2\sqrt2}\lambda_{2\sqrt2,u}>0$, we conclude that
$$
30\Lambda
>
\rho_2-\frac34\rho_2
=
\frac14\rho_2
>0.
$$
Thus $\Lambda>0$, so the logarithmic part is nonzero.
Corollary~\ref{cor:Baker-linear-form} now implies that $J=A+\Lambda$ is
transcendental.
\end{proof}
\begin{proposition}\label{prop:arith}
With the notation of Sections~\ref{sec:BN} and~\ref{sec:arithmetic-input}, the integral
$$
I:=\int_{\tau_0}^{q}\vol_{X_s}(tL_0-D_0)\,dt
$$
is transcendental.
\end{proposition}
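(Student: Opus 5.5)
We reduce $I$ to the integral $J$ of Lemma~\ref{lem:polar-noncancellation} by a change of variables $t\mapsto R(t)$, and then check that the prefactor is algebraic and nonzero.

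\textbf{Step 1: rewrite the integrand.} By Proposition~\ref{prop:disk-reduction},
$$
I=\frac{3N}{2}\int_{\tau_0}^{q}t^4\,\Phi(R(t))\,dt,
\qquad
R(t)=a-\frac{\beta q}{t},\quad a=s+1+\beta.
$$

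\textbf{Step 2: identify the endpoints.} Corollary~\ref{cor:critical-radii} shows that $R$ is a strictly increasing bijection from $(0,\infty)$ onto $(-\infty,a)$, with inverse $t=\beta q/(a-R)$. At the upper endpoint, $R(q)=a-\beta=s+1$. For the lower endpoint we check that $R(\tau_0)=d_0=4/\sqrt{10}$.
\begin{itemize}
\item For $t$ slightly above $t_{d_0}$, the volume formula is positive. Indeed $R(t)>d_0$, so the disk $B_{R(t)}$ meets $T$ in a set of positive area, on which $R^2-x^2-y^2>0$. Hence $\Theta_t$ is big, and so pseudo-effective, giving $\tau_0\le t_{d_0}$.
\item For $t<t_{d_0}$ the volume vanishes. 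This alone does not give non-pseudo-effectivity, so we argue via bigness. If $\Theta_t$ were pseudo-effective for some $t<t_{d_0}$, then $\Theta_{t'}$ for $t<t'<t_{d_0}$ would be the sum of a pseudo-effective class and the positive multiple $(t'-t)L_0$ of an ample class. It would therefore be big, with positive volume, a contradiction. Hence $\tau_0=t_{d_0}$.
\end{itemize}
Alternatively, one can simply integrate from $t_{d_0}$: on $[\tau_0,t_{d_0}]$ the volume vanishes anyway, since $\Phi(R)=0$ for $R<d_0$, so $I$ equals the integral over $[t_{d_0},q]$ regardless. Using this route removes the need to locate $\tau_0$ exactly, and I would do so.

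\textbf{Step 3: substitute.} With $dt=\beta q\,dR/(a-R)^2$ and $t^4=(\beta q)^4/(a-R)^4$,
$$
I=\frac{3N}{2}(\beta q)^5\int_{d_0}^{s+1}\frac{\Phi(R)}{(a-R)^6}\,dR=\frac{3N}{2}(\beta q)^5\,J.
$$
The prefactor is a nonzero rational number, since $N,\beta,q$ are positive integers.

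\textbf{Step 4: conclude.} Lemma~\ref{lem:polar-noncancellation} shows that $J$ is transcendental. A nonzero rational multiple of a transcendental number is transcendental, so $I$ is transcendental.

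\textbf{Main obstacle.} The only delicate point is the lower limit: matching $\tau_0$ with $d_0$, or showing the discrepancy contributes nothing. The integrand-vanishing argument in Step 2 handles this cleanly. One must also check that the substitution is legitimate, that is, $R(t)<a$ on the interval, which is clear since $t>0$, and that $\Phi$ is continuous, which was established in the proof of Proposition~\ref{prop:disk-reduction}.
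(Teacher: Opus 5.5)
Your proposal is correct and follows essentially the same route as the paper's proof. Both use Proposition~\ref{prop:disk-reduction}, pin down $R(\tau_0)=d_0$ via the pseudo-effective/big-cone argument, substitute $t=\beta q/(a-R)$ to get $I=\frac{3N}{2}(\beta q)^5J$, and conclude with Lemma~\ref{lem:polar-noncancellation}. One small caveat is that your ``integrate from $t_{d_0}$ regardless'' shortcut still depends on the inequality $\tau_0\le t_{d_0}$ from your first bullet. That bullet in turn applies the disk formula at values of $t$ that might lie below $\tau_0$, which is legitimate because the proof of Proposition~\ref{prop:disk-reduction} works for every $t>0$.
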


\begin{proof}
Set
$$
a=s+1+\beta.
$$
By Proposition~\ref{prop:disk-reduction},
$$
\vol_{X_s}(tL_0-D_0)
=
\frac{3N}{2}t^4\Phi(R(t)),
\qquad
R(t)=a-\frac{\beta q}{t}.
$$
We first identify the lower endpoint.  The pseudo-effective cone is closed, so
$\tau_0 L_0-D_0$ is pseudo-effective.  It is not big: if it were big, then
openness of the big cone would imply that $tL_0-D_0$ is big, hence
pseudo-effective, for some $t<\tau_0$, contradicting the definition of
$\tau_0$.  Therefore
$$
\vol_{X_s}(\tau_0 L_0-D_0)=0.
$$
For every $t>\tau_0$, one has
$$
tL_0-D_0=(\tau_0 L_0-D_0)+(t-\tau_0)L_0,
$$
and the right-hand side is the sum of a pseudo-effective class and an ample
class.  Hence $tL_0-D_0$ is big and has positive volume.  On the other hand,
$tL_0-D_0$ is not pseudo-effective for $t<\tau_0$.  Proposition~\ref{prop:disk-reduction}
therefore shows that the transition from zero to positive volume occurs exactly
when the growing disk first meets the interior of $T$, namely when
$$
R(t)=d_0:=\frac4{\sqrt{10}}.
$$
Since $R(t)$ is continuous and strictly increasing, it follows that
$$
R(\tau_0)=d_0.
$$
Also,
$$
R(q)=s+1.
$$

Changing variables from $t$ to $R$ gives
$$
t=\frac{\beta q}{a-R},
\qquad
t^4\,dt=\frac{(\beta q)^5}{(a-R)^6}\,dR.
$$
Therefore
$$
I
=
\frac{3N}{2}(\beta q)^5
\int_{d_0}^{s+1}\frac{\Phi(R)}{(a-R)^6}\,dR.
$$
The prefactor is a nonzero algebraic number, while the integral is
transcendental by Lemma~\ref{lem:polar-noncancellation}.  Hence $I$ is
transcendental.
\end{proof}

\section{Existence of transcendental epsilon multiplicity}\label{sec:main}

We now assemble the geometric input from Section~\ref{sec:BN}, the arithmetic calculation from Section~\ref{sec:arithmetic-input}, and the one-ideal formula from Theorem~\ref{thm:one-ideal} to prove the main theorem.

\begin{theorem}\label{thm:main}
There exists a $5$-dimensional normal standard graded domain $S$ and a homogeneous ideal $J\subseteq S$ such that
$$
\eps(JS_{S_+})
$$
is transcendental.
\end{theorem}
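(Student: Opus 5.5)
The plan is to assemble the pieces already in place.  First we fix the geometric data of Section~\ref{sec:BN}: an integer $s\geq 5$, an integer $\beta\geq 2$, and an integer $q>4$ large enough that the four divisors $E_i(q)$ are ample.  This gives the smooth projective $4$-fold $X_s=\mathbf P(\mathcal E_s)$ together with $L_0=M_s+B$ and $D_0=qB$.  Next we choose $k\gg0$ as in Proposition~\ref{prop:geometric-hypotheses} and set
$$
S:=R(X_s,L'),\qquad L'=kL_0,\qquad D'=kD_0,\qquad c=q.
$$
We define $J:=S\cdot H^0(X_s,qL'-D')$, with the generators placed in degree $q$.  By Proposition~\ref{prop:geometric-hypotheses}, $S$ is a normal standard graded domain.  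Its dimension is $\dim X_s+1=5$, since $\dim X_s=\dim Y+2=4$.

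We then apply Theorem~\ref{thm:one-ideal} to the triple $(X_s,L',D')$ with $c=q$.  Its three hypotheses (global generation, normal generation, and eventual $0$-regularity of $\Mcal'^{\otimes n}$) are exactly what Proposition~\ref{prop:geometric-hypotheses} supplies.  This gives
$$
\eps(JS_{S_+})=5\int_{\tau_{L'}(D')}^{q}\vol_{X_s}(tL'-D')\,dt.
$$
Since $tL'-D'=k(tL_0-D_0)$, the threshold satisfies $\tau_{L'}(D')=\tau_0$, and by degree-$4$ homogeneity of volume the integrand equals $k^4\vol_{X_s}(tL_0-D_0)$.  Hence $\eps(JS_{S_+})=5k^4 I$, with $I$ as in Proposition~\ref{prop:arith}.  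Equivalently, one may invoke Theorem~\ref{thm:realization} or Corollary~\ref{cor:transcendence-transport} directly: $c L_0-D_0=qM_s$ is ample, so that corollary applies verbatim.

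Finally, Proposition~\ref{prop:arith} shows that $I$ is transcendental.  Since $5k^4$ is a nonzero rational number, $\eps(JS_{S_+})$ is transcendental, which proves the theorem.

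The only step needing care is consistency of conventions.  The ideal must be generated in degree $c=q$ of $R(X_s,kL_0)$, matching Remark~\ref{rem:degree-convention}.  The integer $k$ must be taken large enough to satisfy simultaneously the projective normality of $kL_0$ and the normal generation of $kqM_s$, and this is already arranged in Proposition~\ref{prop:geometric-hypotheses}.  No genuine obstacle remains beyond checking the dimension count and the scaling identity for $\tau$.
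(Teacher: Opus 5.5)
Your proposal is correct and follows the paper's proof essentially step for step. You apply Theorem~\ref{thm:one-ideal} with $c=q$ to $(X_s,kL_0,kD_0)$, whose hypotheses are supplied by Proposition~\ref{prop:geometric-hypotheses}, then use $\tau_{L'}(D')=\tau_0$ and degree-$4$ homogeneity to get $5k^4 I$, and finish with Proposition~\ref{prop:arith}; your alternative appeal to Theorem~\ref{thm:realization} (valid since $qL_0-D_0=qM_s$ is ample) is the same argument in different packaging.
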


\begin{proof}
Fix $s,\beta,q,k$ as in Section~\ref{sec:BN}, and let
$$
X:=X_s,
\qquad
S:=R(X,L'),
\qquad
J:=S\cdot H^0(X,qL'-D')\subset S.
$$
By Proposition~\ref{prop:geometric-hypotheses}, $L'$ defines a projectively
normal embedding of the smooth variety $X_s$.  Hence
$S=R(X_s,L')$ is a normal standard graded domain.  Since $\dim X_s=4$, we have
$\dim S=5$, and $J$ is a homogeneous ideal of $S$.

Because $L'=kL_0$ and $D'=kD_0$, the class $tL'-D'$ is pseudo-effective if
and only if $tL_0-D_0$ is pseudo-effective.  Therefore
$$
\tau_{L'}(D')=\tau_{L_0}(D_0)=\tau_0.
$$
Theorem~\ref{thm:one-ideal} applies with $c=q$, and gives
$$
\eps(JS_{S_+})
=
5\int_{\tau_0}^{q}\vol_{X_s}(tL'-D')\,dt.
$$
For every $t\in[\tau_0,q]$,
$$
tL'-D'=k(tL_0-D_0).
$$
Since $\dim X_s=4$, homogeneity of the volume function yields
$$
\vol_{X_s}(tL'-D')
=
k^4\vol_{X_s}(tL_0-D_0).
$$
Consequently,
$$
\eps(JS_{S_+})
=
5k^4\int_{\tau_0}^{q}\vol_{X_s}(tL_0-D_0)\,dt.
$$
The integral is transcendental by Proposition~\ref{prop:arith}.  Multiplication
by the nonzero algebraic number $5k^4$ preserves transcendence, so
$\eps(JS_{S_+})$ is transcendental.
\end{proof}

\section{Further questions on the arithmetic of epsilon multiplicity}\label{sec:future-directions}

Theorem~\ref{thm:main} shows that epsilon multiplicity can have genuinely
transcendental values.  Together with the irrationality examples of
Cutkosky, H\`a, Srinivasan, and Theodorescu \cite{CHST}, this suggests that
epsilon multiplicity should be studied not only as an asymptotic invariant of
ideals, but also as an arithmetic object.  The mechanism of the present paper
points to a more specific question: when can a local cohomological limit be
transported to a divisor-volume integral, and what arithmetic restrictions are
then imposed by the geometry of the resulting volume function?

\begin{question}\label{q:arith-geography}
Which geometric or algebraic hypotheses force $\varepsilon(I)$ to be rational?
More generally, what is the possible arithmetic range of $\varepsilon(I)$ for
ideals of maximal analytic spread that are not $\mathfrak m$-primary?
\end{question}

Theorem~\ref{thm:main} gives transcendental values, while \cite{CHST} gives
irrational values.  On the other hand, monomial, graded, and other structured
settings often exhibit stronger finiteness or computability properties
\cite{JM13,JMV15,DDRV25}.  It would be useful to identify the geometric
features of an ideal, a Rees algebra, or a volume function that separate
rational behavior from algebraic irrationality and transcendence.

\begin{question}\label{q:rigidity}
Can transcendental epsilon multiplicity occur under stronger algebraic
constraints, for instance for ideals in regular local rings, homogeneous
prime ideals, or integrally closed ideals?
\end{question}

The construction in this paper passes through divisorial ideals on section
rings and a projective-bundle volume computation.  It is therefore natural to
ask whether transcendence persists in more rigid classes, or whether such
classes impose arithmetic constraints on epsilon multiplicity.

\begin{question}\label{q:transport}
For which broader classes of ideals or numerical invariants can one express
the leading local cohomological asymptotic directly in terms of divisor
volumes or comparable geometric asymptotics?
\end{question}

A broader transport principle could connect epsilon multiplicity with other
invariants defined by asymptotic length formulas, including invariants in
prime characteristic and birational geometry.  The point would not only be to
produce further transcendence results, but to understand when arithmetic
features of volume functions are preserved by local algebraic constructions.

\begin{center}
Acknowledgments
\end{center}
We would like to thank Iacopo Brivio and Jonathan Montaño for helpful discussions. The second author would also like to thank the Center for Mathematical Sciences and Applications at Harvard and the Hebrew University of Jerusalem for their support during this project. The second author was partially supported by the Israel Science Foundation grant ISF-687/24.


\begin{thebibliography}{99}



\bibitem{BN}
C.~Borntr\"ager and M.~Nickel,
\emph{Algebraic volumes of divisors},
European J. Math. \textbf{5} (2019), no.~4, 1192--1201.

\bibitem{BS}
T.~Bauer and T.~Szemberg,
\emph{On tensor products of ample line bundles on abelian varieties},
Math. Z. \textbf{223} (1996), no.~1, 79--85.



\bibitem{CutSarkar}
S.~D.~Cutkosky and P.~Sarkar,
\emph{Epsilon multiplicity and analytic spread of filtrations},
Illinois J. Math. \textbf{68} (2024), no.~1, 189--210.

\bibitem{Sarkar26}
P.~Sarkar,
\emph{Epsilon multiplicity, multiplicity=volume formula and analytic spread of family of ideals},
preprint, \href{https://arxiv.org/abs/2605.03814}{arXiv:2605.03814} (2026).



\bibitem{Hartshorne}
R.~Hartshorne,
\emph{Ample vector bundles},
Inst. Hautes \`Etudes Sci. Publ. Math. \textbf{29} (1966), 63--94.

\bibitem{Lazarsfeld}
R.~Lazarsfeld,
\emph{Positivity in Algebraic Geometry I},
Ergebnisse der Mathematik und ihrer Grenzgebiete, vol.~48,
Springer, 2004.


\bibitem{Stacks}
\emph{The Stacks Project},
Tags 01Q1, 084M, 01AL, 01O4, 01OB, and 01OC,
\url{https://stacks.math.columbia.edu}.


\bibitem{UlrichValidashti}
B. Ulrich and J. Validashti,
\emph{Numerical criteria for integral dependence},
Math. Proc. Cambridge Philos. Soc. \textbf{151} (2011), no.~1, 95--102.

\bibitem{CHST}
S. D. Cutkosky, H. T\`ai H\`a, H. Srinivasan, and E. Theodorescu,
\emph{Asymptotic behavior of the length of local cohomology},
Canad. J. Math. \textbf{57} (2005), no.~6, 1178--1192.

\bibitem{Cut11}
S. D. Cutkosky,
\emph{Asymptotic growth of saturated powers and epsilon multiplicity},
Math. Res. Lett. \textbf{18} (2011), no.~1, 93--106.

\bibitem{CutLan24}
S. D. Cutkosky and S. Landsittel,
\emph{Epsilon multiplicity is a limit of Amao multiplicities},
J. Algebra Appl. \textbf{24} (2025), nos.~13--14, 2541021.

\bibitem{Lan25}
S. Landsittel,
\emph{Some formulas for epsilon multiplicity in local rings},
Commun. Algebra \textbf{54} (2026), no.~5, 1962--1975.

\bibitem{DRT25}
S. Das, S. Roy, and V. Trivedi,
\emph{Density functions for epsilon multiplicity and families of ideals},
J. Lond. Math. Soc. \textbf{111} (2025), no.~4, e70155.

\bibitem{Fulger}
M. Fulger,
\emph{Local volumes of Cartier divisors over normal algebraic varieties},
Ann. Inst. Fourier (Grenoble) \textbf{63} (2013), no.~5, 1793--1847.

\bibitem{Waldschmidt}
M. Waldschmidt,
\emph{Linear Independence of Logarithms of Algebraic Numbers},
IMSc Report No.~116, Institute of Mathematical Sciences, Madras, 1992.

\bibitem{BGGJKM}
J.~I.~Burgos Gil, W.~Gubler, P.~Jell, K.~K\"unnemann, and F.~Martin,
\emph{Differentiability of non-Archimedean volumes and non-Archimedean Monge--Amp\`ere equations},
with an appendix by R.~Lazarsfeld,
Algebr. Geom. \textbf{7} (2020), no.~2, 113--152.

\bibitem{JM13}
J.~Jeffries and J.~Monta\~no,
\emph{The $j$-multiplicity of monomial ideals},
Math. Res. Lett. \textbf{20} (2013), no.~4, 729--744.

\bibitem{JMV15}
J.~Jeffries, J.~Monta\~no, and M.~Varbaro,
\emph{Multiplicities of classical varieties},
Proc. Lond. Math. Soc. (3) \textbf{110} (2015), no.~4, 1033--1055.

\bibitem{DDRV25}
S.~Das, S.~Dubey, S.~Roy, and J.~K.~Verma,
\emph{Computing epsilon multiplicities in graded algebras},
J. Pure Appl. Algebra \textbf{229} (2025), no.~11, 108107.

\end{thebibliography}
\end{document}